\theoremstyle{plain}
\newtheorem{thm}{Theorem}[section]
\newtheorem{prop}[thm]{Proposition}
\newtheorem{lemma}[thm]{Lemma}
\newtheorem{cor}[thm]{Corollary}
\theoremstyle{definition}
\newtheorem{defn}[thm]{Definition}
\newtheorem*{defn*}{Definition}
\newtheorem*{question*}{Question}
\newtheorem{question}{Question}
\newtheorem{example}[thm]{Example}
\newtheorem*{example*}{Example}
\newtheorem{rem}[thm]{Remark}
\newtheorem*{rem*}{Remark}
\newcommand{\field}[1]{\mathbb{#1}}
\newcommand{\N}{\field{N}}
\newcommand{\Z}{\field{Z}}
\newcommand{\Q}{\field{Q}}
\newcommand{\R}{\field{R}}
\newcommand{\F}{\field{F}}
\newcommand{\C}{\field{C}}
\newcommand{\A}{\field{A}}
\newcommand{\PP}{\field{P}}
\newcommand{\ideal}[1]{\mathfrak{#1}}
\newcommand{\m}{\ideal{m}}
\newcommand{\func}[1]{\mathrm{#1} \,}
\newcommand{\Spec}{\func{Spec}}
\newcommand{\ra}{\rightarrow}
\newcommand{\be}{\begin{enumerate}}
\newcommand{\ee}{\end{enumerate}}
\newcommand{\li}
 {\leftfootline}
\newcommand{\onto}{\twoheadrightarrow}
\newcommand{\into}{\hookrightarrow}
\newcommand{\cN}{\mathcal{N}}
\newcommand{\cO}{\mathcal{O}}
\renewcommand{\phi}{\varphi}
\DeclareMathOperator{\Frac}{Frac}
\DeclareMathOperator{\chr}{char}
\DeclareMathOperator{\Max}{Max}
\let\int\relax
\DeclareMathOperator{\int}{i}
\newcommand{\ua}{unit-additive}
\newcommand{\uad}{unit dimension}
\newcommand{\ut}
{UU}
\newcommand{\uty}
{the UU property}
\DeclareMathOperator{\Jac}{J}
\newcommand{\uasym}{{\rm{ua}}}
\newcommand{\alg}{{\rm{alg}}}
\DeclareMathOperator{\udim}{udim}
\newcommand{\puncaone}{{\mathbb A}^1_k \setminus \{\mathbf 0\}}
\author{Neil Epstein}
\address{Department of Mathematical Sciences \\ George Mason University \\ Fairfax, VA  22030}
\email{nepstei2@gmu.edu}
\author{Jay Shapiro}
\address{Department of Mathematical Sciences \\ George Mason University \\ Fairfax, VA  22030}
\email{jshapiro@gmu.edu}
\title[Rings where a non-nilpotent sum of units is a unit]{Rings where a non-nilpotent \\ sum of units is a unit}
\subjclass[2020]{Primary: 
13F99
; Secondary: 16U60
, 20M25
,  13G05
}
\keywords{unit-additive; monoid rings; dimension; affine algebras}
\date{April 18, 2025}
\begin{document}

\begin{abstract}
A ring is \emph{\ua} if a sum of units is always either a unit or nilpotent. For example, $k[X]$ and $k[X]/(X^2)$ are unit-additive, but $\mathbb Z$ is not.  We prove a wide-ranging theorem about unit-additivity in semigroup rings, showing among other things that an affine semigroup ring $A[M]$ is unit-additive if and only if $A$ is unit-additive and $M$ has no nontrivial invertible elements.   Passing to algebraic geometry, we show that an irreducible affine variety $V$ over an algebraically closed field $k$ has unit-additive coordinate ring if and only if any polynomial mapping $V \ra k$ has a root.  This then places $\A^1_k$ into the class of varieties that satisfy a version of the Fundamental Theorem of Algebra.
Specializing to elliptic curves, we show that the affine coordinate ring of an elliptic curve is always unit-additive.  The concept of unit additivity leads to the related concept of \uad\ -- i.e. how far is an integral domain from being unit-additive? It turns out that rings of \uad\ 1 are of some interest, as they include the rings of integers of number fields, all power series rings, and most local rings.  We construct rings of all \uad s and show that in the affine setting, \uad\ is bounded above by Krull dimension. We also construct the \emph{unit-additive closure} of an integral domain $D$, being the smallest subring of the fraction field of $D$ that is unit-additive, as a localization at a certain multiplicative set in $D$. Throughout, we make connections with well-studied structures like PIDs, Euclidean domains, and the UU property.
\end{abstract}
\maketitle

\section{Introduction}
It is never true in a nonzero ring that every sum of units is a unit.  This is because if $u$ is a unit, then so is $-u$, but $u+(-u)=0$ is not a unit.  By the same token, if $n$ is any nilpotent element, then both $-u$ and $u+n$ are units but $n=(u+n)+(-u)$ is not. We are interested in rings which are as close as possible to having the units be closed under addition.  Namely, the following is the basic definition of our paper:

\begin{defn}\label{def:ua}
A commutative ring $R$ is \emph{\ua} if whenever $u,v$ are units of $R$, $u+v$ is either a unit of $R$ or a nilpotent element.
\end{defn}

This condition has surely come up in many contexts over time.  For instance, a polynomial ring over a field is \ua, but $\Z$ is not.  It has arisen in the work of the present authors at least twice: in a study of perinormality (a condition close to normality), see \cite[Theorem 4.2]{nmeSh-peripull}, and in a study of Euclidean and Egyptian domains, see \cite[Theorem 2.10]{nme-Euclidean}.  Earlier, in \cite{DJ-utriv} and \cite{HeiRo-utriv} people studied rings in which 1 is the only unit.  In the noncommutative literature \cite{Cal-UU, DaLa-UU}, there were studies of the more general ring-theoretic property (the \emph{UU} property) where every unit is unipotent -- i.e. the sum of a unit and a nilpotent element.  We show that such rings, when commutative, are necessarily \ua. However, we are not aware of Definition~\ref{def:ua} ever being made before. This article then represents our initial investigation of this topic which appears fundamental in commutative ring theory.  In it, we find connections with such diverse topics as monoid algebras, irreducible algebraic varieties, elliptic curves, and Euclidean domains.

It turns out that there are a number of equivalent criteria for unit-additivity (See Proposition~\ref{pr:ua}), including that the units and nilpotent elements form a subring of $R$.  Throughout Section~\ref{sec:ua}, we explore examples, nonexamples, and constructions of unit-additivity.  We show that unit-additivity imposes conditions on the characteristic, the nilradical, and the Jacobson radical of the ring.  We show that unit-additivity often passes to subrings, and occasionally to residue class rings, under many popular constructions.  
Additionally, we show that 
when a Euclidean domain $D$ is finitely generated over a field $k$, and either $k$ is algebraically closed or $1$ is the only unit of $D$, then either $D \cong k$ or $D \cong k[x]$
(see Theorem~\ref{thm:Eucl}).  We also show that one may ``reduce to the reduced case'' when studying unit-additivity (see Proposition~\ref{pr:uamodnilpotents}).

Section~\ref{sec:mon} is devoted to characterizing unit-additivity in monoid algebras in terms of properties of the monoid and of the base ring. 
 The main theorem of Section~\ref{sec:mon} gives equivalent criteria for monoid algebras to be unit-additive when the monoid is cancellative; see Theorem~\ref{thm:monalg}.  It says among other things that when $A$ is a commutative ring of characteristic zero and $M$ is a cancellative monoid, then $A[M]$ is \ua\ if and only if $A$ is \ua\ and $M$ is torsion-free and has no nonidentity invertible elements.  We obtain a particularly clean statement for affine semigroup algebras; see Proposition~\ref{pr:tfmonalg}.

The purpose of Section~\ref{sec:affine} is to give a criterion for unit-additivity of an affine domain $R$ over an algebraically closed field $k$ in terms of a restriction on $k$-algebra maps from $k[t,1/t]$ to $R$. See Theorem~\ref{thm:puncaone}. This translates to a geometric condition for an affine scheme to be the spectrum of a \ua\ ring.  See Theorem~\ref{thm:geom}, which says among other things that the coordinate ring of an irreducible affine variety over an algebraically closed field is \ua\ if and only if it admits no nonconstant morphisms to the punctured affine line.  Hence, unit-additive affine rings over an algebraically closed field can be seen as a natural setting for extensions of the Fundamental Theorem of Algebra; see Remark~\ref{rem:FTA}.

In Section~\ref{sec:elliptic}, we isolate a fairly general sort of equation, represented by a polynomial in two variables over a perfect field, which in degree 3 is the typical equation arising from the study of elliptic curves.  We show that all rings arising from these equations are \ua\ integral domains, which are usually Dedekind domains but seldom PIDs.

In Section~\ref{sec:uadim}, we introduce an invariant called \uad, defined by an iterative process, which measures how \emph{far} a domain is from being \ua.  We show that all \uad s can occur (see Examples~\ref{ex:uadimisdim} and \ref{ex:inftyua}), with all finite \uad s occuring in finitely generated algebras over an arbitrary field. In Theorem~\ref{thm:uadimbound}, we show that at least for affine algebras, \uad\ is bounded above by Krull dimension. However, the difference between these two dimensions can be made to be arbitrarily large (see Corollary~\ref{cor:udimvars}).

The short section~\ref{sec:uad1} is devoted to the study of domains of \uad\ 1 -- i.e. domains that are as close as possible to being \ua\ without actually satisfying the condition.  In particular, we give a sufficient condition (Proposition~\ref{pr:Jacudim}) in terms of a nonvanishing Jacobson radical, but then we show in  Example~\ref{ex:algint} that the condition is not necessary for a domain to have \uad\ 1.

In Section~\ref{sec:uac}, we introduce for any domain $R$ a localization of $R$ that is \ua\ and has a universal mapping property with respect to injective maps from $R$ to \ua\ domains (See Theorem~\ref{thm:ualoc}).  This then coincides with the \emph{\ua\ closure} of $R$ -- i.e., the intersection of all \ua\ subrings of the fraction field of $R$ that contain $R$ (see Proposition~\ref{pr:uacl}).

We conclude the paper with five interesting questions for further research; see Section~\ref{sec:questions}. 

Despite appearances from the summary above, most of the results about unit-additivity and \uad\ do not require the ring to be an integral domain.  However, we must often at least assume that the nilradical is prime. In a paper in prepartion, we largely remove these obstructions.

  \vspace{.1in}
\noindent \textbf{Notation and conventions:} For a ring $R$, the nilradical of $R$ is denoted $\cN(R)$.  The Jacobson radical is $\Jac(R)$. The group of units of $R$ is denoted $U(R)$.  All rings are commutative and unital.

\section{Unit additivity and \uty}\label{sec:ua}
In this section, we start with some characterizations of unit-additivity and give some constructions which preserve it, along with limiting counterexamples. Then we recall \uty\ and relate it to unit-additivity.  Finally we show that the study of unit-additivity reduces to a study of reduced rings.

\begin{prop}\label{pr:ua}
Let $R$ be a nonzero ring.  The following are equivalent: \begin{enumerate}
    \item\label{it:add1} For any unit $u$ of $R$, $u+1$ is either a unit or nilpotent.
    \item\label{it:ua} $R$ is \ua.
    \item\label{it:sr} The set $U(R) \cup \cN(R)$ is a subring of $R$.
    \item\label{it:0dimsr} The set $U(R) \cup \cN(R)$ is a zero-dimensional local subring of $R$.
    \item\label{it:list} For any finite list $u_1, \ldots, u_n$ of units of $R$, $\sum_{i=1}^n u_i$ is either a unit or nilpotent.
\end{enumerate}
If $R$ is a reduced ring, then the following condition is also equivalent to the above conditions: \begin{enumerate}
    \setcounter{enumi}{5}
    \item\label{it:subf} The set $U(R) \cup \{0\}$, with structure inherited from $R$, is a field, called the \emph{field of units} of $R$.
\end{enumerate}
\end{prop}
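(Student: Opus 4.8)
The plan is to establish the equivalences by running a cycle among conditions (1), (2), and (5), to handle (3) and (4) by direct inspection of the candidate subring $S = U(R)\cup\cN(R)$, and to treat (6) by specializing to the reduced case. Throughout I would lean on two elementary facts valid in any commutative ring: the product of a unit and a nilpotent is again nilpotent, and the sum of a unit and a nilpotent is again a unit.

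For $(1)\Rightarrow(2)$, given units $u,v$ I would factor $u+v = v(v^{-1}u+1)$. Since $v^{-1}u$ is a unit, hypothesis (1) makes $v^{-1}u+1$ a unit or a nilpotent, and multiplying by the unit $v$ then yields a unit or a nilpotent respectively. The implication $(2)\Rightarrow(5)$ is an induction on $n$: writing $s=\sum_{i=1}^{n-1}u_i$, the inductive hypothesis makes $s$ a unit or a nilpotent; if $s$ is a unit, then $s+u_n$ is a unit or nilpotent by (2), while if $s$ is nilpotent, then $s+u_n$ is a unit since a nilpotent plus a unit is a unit. Finally $(5)\Rightarrow(1)$ is immediate on taking $n=2$, $u_1=u$, $u_2=1$.

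For $(2)\Leftrightarrow(3)$, assuming (2) I would verify that $S$ is closed under the ring operations: it contains $0$ and $1$ and is closed under negation; it is closed under multiplication because the product of two units is a unit, the product of a unit and a nilpotent is a nilpotent, and the product of two nilpotents is a nilpotent; and it is closed under addition because unit-plus-unit is handled by (2), while unit-plus-nilpotent is a unit and nilpotent-plus-nilpotent is a nilpotent. Conversely, if $S$ is a subring then for units $u,v$ the sum $u+v$ lies in $S$, which is exactly (2). For $(3)\Leftrightarrow(4)$, the real content is to identify the units and nonunits of $S$: any $u\in U(R)$ has inverse $u^{-1}\in U(R)\subseteq S$, so $U(S)=U(R)$, whence the nonunits of $S$ are precisely $\cN(R)$, which is the nilradical of $S$ and hence an ideal. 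Thus $S$ is local with maximal ideal equal to its nilradical, forcing the nilradical to be the only prime of $S$ and $S$ to be zero-dimensional; the reverse implication is trivial.

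For the reduced case, $\cN(R)=0$ makes $S=U(R)\cup\{0\}$, so (3) says $S$ is a subring while (6) says $S$ is a field, and these coincide because every nonzero element of $S$ is a unit of $R$ whose inverse already lies in $S$. I expect the one genuinely delicate step to be $(3)\Rightarrow(4)$: one must argue carefully that passing to the subring $S$ neither creates new units nor alters which elements are nilpotent, so that the nonunits of $S$ really do form the ideal $\cN(R)$ and dimension zero follows. The remaining steps are routine closure and specialization arguments.
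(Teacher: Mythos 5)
Your proposal is correct and follows essentially the same route as the paper's proof: the same factorization $u+v=v(v^{-1}u+1)$ for $(1)\Rightarrow(2)$, the same three-case closure check for the subring $U(R)\cup\cN(R)$, the same identification of the units of that subring with $U(R)$ to get locality and dimension zero, and the same induction for the finite-sum condition. The only cosmetic differences are organizational (you close the cycle $(1)\Rightarrow(2)\Rightarrow(5)\Rightarrow(1)$ and link (6) to (3) rather than to (4)), which does not change the substance.
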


\begin{proof}
(\ref{it:0dimsr}) $\implies$ (\ref{it:sr}) $\implies$ (\ref{it:ua}) $\implies$ (\ref{it:add1}): clear from the definitions.

(\ref{it:add1}) $\implies$ (\ref{it:ua}): Let $f,g$ be units of $R$.  Then $u=f/g$ is a unit, so by assumption $u+1$ is either a unit or nilpotent.  Then since $g$ is a unit, if $u+1$ is a unit then so is $(u+1)g = f+g$, and if $u+1$ is nilpotent then since $\cN(R)$ is an ideal, $(u+1)g =f+g$ is also nilpotent.

(\ref{it:ua}) $\implies$ (\ref{it:sr}): Write $k=U(R) \cup \cN(R)$.  Let $a,b \in k$.  We have $1\in U(R) \subseteq k$ and $0 \in \cN(R) \subseteq k$. So let $a,b \in k$.  We must show that $a-b$ and $ab$ are in $k$.  There are three cases to consider.

Case 1: Suppose $a,b \in U(R)$.  Then $-b = (-1)b \in U(R)$ as well (since $-1 \in U(R)$) and since $R$ is \ua, we have $a-b=a+(-b) \in k$.  But also $ab \in U(R) \subseteq k$ since $U(R)$ is closed under multiplication.

Case 2: Suppose $a\in U(R)$ and $b \in \cN(R)$.  Then $ab \in \cN(R)$ (since $\cN(R)$ is an ideal) and $a-b \in U(R)$ (since the sum of a unit and a nilpotent is always a unit).  By symmetry, the same argument holds if $a\in \cN(R)$ and $b \in U(R)$.

Case 3: Suppose $a,b \in \cN(R)$.  Then $a-b, ab \in \cN(R)$ since $\cN(R)$ is an ideal.

(\ref{it:sr}) $\implies$ (\ref{it:0dimsr}): Write $k=U(R) \cup \cN(R)$.  Note that the units of $k$ and those of $R$ coincide.  Hence, the set of nonunits of $k$ consist of nilpotent elements.  It follows that $\cN(R)$ is the unique prime ideal of $k$.

(\ref{it:list}) $\implies$ (\ref{it:ua}): Specialize to the case $n=2$.

(\ref{it:ua}) $\implies$ (\ref{it:list}): Since as is well-known, a sum of two nilpotents is nilpotent and the sum of a unit and a nilpotent is a unit, it follows by a straightforward induction argument that (\ref{it:ua}) implies that any finite sum of units is either a unit or nilpotent.

Finally, when $R$ is reduced, (\ref{it:0dimsr}) $\iff$ (\ref{it:subf}) since $\cN(R) = \{0\}$, and the only zero-dimensional reduced local rings are fields.
\end{proof}

\begin{cor}\label{cor:char}
Let $R$ be a nonzero \ua\ ring.  If $\chr(R)>0$, then $\chr(R)$ is a  power of a prime number.
\end{cor}

\begin{proof}
Let $k = U(R) \cup \cN(R)$.  Then $k$ is a local subring of $R$.  But the only possible characteristics for a local ring are $0$ and powers of primes.  Hence, $\chr(R) = \chr(k) = p^n$.
\end{proof}

\begin{example}\label{ex:first}
For any ring $R$, $R$ is \ua\ if and only if $R[x]$ is \ua.  This is because $U(R[x]) = U(R)$ and $\cN(R[x]) \supseteq \cN(R)$.  For a careful proof and generalization, see Corollary~\ref{cor:sg}.
\end{example}

\begin{example}\label{ex:laurent}
  The ring $R[x, x^{-1}]$ (where $x$ is an indeterminate over $R$) is never \ua, since $x+x^{-1}$ is neither a unit nor nilpotent.
\end{example}

\begin{example}\label{ex:Z} As any reduced \ua\ ring contains a field, $\Z$ is not \ua.
\end{example}

\begin{example}\label{ex:Jacnil}
No ring such that $\Jac(R) \neq \cN(R)$ is \ua.  Indeed, let $j \in \Jac(R) \setminus \cN(R)$.  Then $1+j$ and $-1$ are units, but $j=(1+j)+(-1)$ is neither a unit nor nilpotent.  Hence, any local ring of positive dimension, and any ring of the form $R[\![x]\!]$ (with $x$ an analytic indeterminate), is not \ua. See also Proposition~\ref{pr:Jacudim} and Corollary~\ref{cor:Jacudim}.
\end{example}

\begin{prop}\label{pr:subalg}
Let $S$ be \ua.  Let $k = U(S) \cup \cN(S)$.  Let $R$ be a $k$-subalgebra of $S$.  Then $R$ is \ua.
\end{prop}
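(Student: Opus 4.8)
The plan is to verify the definition of unit-additivity directly: given any two units $u,v$ of $R$, I will show that $u+v$ is either a unit or nilpotent in $R$. The whole argument hinges on first understanding how the units and nilpotents of $R$ relate to those of $S$, and the key observation will be that the hypothesis ``$R$ is a $k$-subalgebra'' forces $U(R)=U(S)$.

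First I would record that, since $S$ is \ua, Proposition~\ref{pr:ua} guarantees that $k=U(S)\cup\cN(S)$ is genuinely a subring of $S$, so the phrase ``$k$-subalgebra'' makes sense and gives the chain of inclusions $k\subseteq R\subseteq S$. Next I would establish the central claim $U(R)=U(S)$. The inclusion $U(R)\subseteq U(S)$ is immediate, since an inverse computed inside $R$ also witnesses invertibility in the larger ring $S$. For the reverse inclusion, take $w\in U(S)$. Then $w\in U(S)\subseteq k\subseteq R$, and crucially its inverse $w^{-1}$ is again a unit of $S$, hence likewise lies in $U(S)\subseteq k\subseteq R$; thus $w$ is already invertible inside $R$, i.e. $w\in U(R)$. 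This step---that $R$, being a $k$-subalgebra, automatically contains the inverse of every element of $U(S)$---is the heart of the proof and the only place where the specific hypothesis on $R$ is used; the statement would fail outright for an arbitrary subring.

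With $U(R)=U(S)$ in hand, the conclusion is quick. Given units $u,v$ of $R$, they are units of $S$, so by unit-additivity of $S$ the sum $u+v$ is either a unit of $S$ or nilpotent in $S$. In the first case $u+v\in U(S)=U(R)$ is a unit of $R$. In the second case $u+v$ lies in $R$ (as $u,v\in R$) and satisfies $(u+v)^n=0$ for some $n$; since this equation holds verbatim in the subring $R$, the element $u+v$ is nilpotent in $R$. In either case the sum is a unit or nilpotent in $R$, so $R$ satisfies the definition of \ua\ (equivalently, any of the conditions of Proposition~\ref{pr:ua}).

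I do not anticipate a genuine obstacle here: the argument is essentially bookkeeping of inclusions, and the one substantive point, $U(S)\subseteq U(R)$, is precisely what the $k$-subalgebra hypothesis is designed to deliver. The only thing to watch is not to conflate units of $R$ with units of $S$ before the identity $U(R)=U(S)$ has been established.
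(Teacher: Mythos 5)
Your proof is correct and is essentially the paper's argument: both hinge on the observation that a unit of $S$ has its inverse in $k=U(S)\cup\cN(S)\subseteq R$, and that nilpotency passes down to the subring. The only cosmetic difference is that you package this as the explicit identity $U(R)=U(S)$ before applying it, whereas the paper applies the same observation directly to the sum $u+v$.
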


\begin{proof}
    Let $u,v \in U(R)$.  Since $U(R) \subseteq U(S)$, it follows from the fact that $S$ is \ua\ that $u+v \in \cN(S) \cup U(S) = k$.  If $u+v \in \cN(S)$ then it is nilpotent and we are done.  Otherwise $u+v$ is a unit in $k$, so $w = (u+v)^{-1} \in k \subseteq R$.  Hence $u+v$ is a unit of $R$.
\end{proof}

\begin{prop}\label{pr:intext}
    Let $R \subseteq S$ be an integral extension of rings. If $S$ is \ua, then so is $R$.
\end{prop}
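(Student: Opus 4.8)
The plan is to reduce to criterion~(\ref{it:add1}) of Proposition~\ref{pr:ua} and then to exploit the classical fact that an element of the smaller ring which becomes a unit upstairs was already a unit downstairs, for integral extensions. So I would take a unit $u$ of $R$ and aim to show that $u+1$ is either a unit or nilpotent in $R$. Since $R \subseteq S$, the element $u$ is also a unit of $S$ (its inverse in $R$ serves as its inverse in $S$), so by the hypothesis that $S$ is \ua, the element $u+1$ is either a unit or nilpotent in $S$. The nilpotent case is immediate: if $(u+1)^n = 0$ holds in $S$, then it holds in $R$ as well, since $R$ and $S$ share the same zero, whence $u+1 \in \cN(R)$.

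The substantive case is when $u+1$ is a \emph{unit} of $S$, and here I must promote this to a unit of $R$; this is the step I expect to carry the weight, and integrality is exactly what makes it go through. Set $w = (u+1)^{-1} \in S$. Because the extension is integral, $w$ satisfies a monic relation $w^m + a_{m-1} w^{m-1} + \cdots + a_1 w + a_0 = 0$ with all $a_i \in R$. Multiplying this relation through by $(u+1)^{m-1}$ and using $w^k (u+1)^{m-1} = (u+1)^{m-1-k}$ (with $w^m(u+1)^{m-1} = w$) collapses it to $w = -\bigl(a_{m-1} + a_{m-2}(u+1) + \cdots + a_0 (u+1)^{m-1}\bigr)$, exhibiting $w$ as an element of $R$. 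Hence $(u+1)^{-1} \in R$ and $u+1$ is a unit of $R$.

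In either case $u+1 \in U(R) \cup \cN(R)$, so $R$ satisfies criterion~(\ref{it:add1}) of Proposition~\ref{pr:ua} and is therefore \ua. The only real content is the unit-descent computation for integral extensions; everything else is bookkeeping, so I do not anticipate any genuine obstacle beyond recording that standard argument carefully.
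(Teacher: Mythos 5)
Your proof is correct, and its skeleton matches the paper's: both arguments come down to the single fact that an element of $R$ which becomes a unit in $S$ is already a unit of $R$ when $R \subseteq S$ is integral (nilpotency descent being trivial). The difference is how that unit-descent fact is justified. The paper simply quotes the Lying-Over theorem: if $u+v$ were a non-unit of $R$ it would lie in a maximal ideal of $R$, which by Lying-Over is the contraction of a prime of $S$, contradicting $u+v \in U(S)$; this makes the whole proof two lines at the cost of invoking a named theorem. You instead prove unit descent from scratch: writing down a monic equation for $w=(u+1)^{-1}$ over $R$ and multiplying by $(u+1)^{m-1}$ to exhibit $w = -\bigl(a_{m-1} + a_{m-2}(u+1) + \cdots + a_0(u+1)^{m-1}\bigr) \in R$. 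Your computation is right (each term $a_k w^k (u+1)^{m-1}$ collapses to $a_k (u+1)^{m-1-k}$, and the leading term collapses to $w$), and this route is self-contained and elementary, needing no prime-ideal theory at all. A minor cosmetic difference: you route through criterion~(\ref{it:add1}) of Proposition~\ref{pr:ua} (testing $u+1$), while the paper verifies the definition directly with $u+v$; these are interchangeable. Either proof would be acceptable in the paper.
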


\begin{proof}
Let $u,v$ be units of $R$ such that $u+v \notin \cN(R)$.  Then $u+v \notin \cN(S)$, so $u+v \in U(S) \cap R = U(R)$, by the Lying-Over property of integral extensions (see e.g. \cite[Theorem 9.3(i)]{Mats}).
\end{proof}

\begin{example}
If $S$ is merely \emph{almost integral} over $R$, then Proposition~\ref{pr:intext} may fail.  For example, let $S = \Q[x]$ and $R = \Z + x\Q[x]$.  Then $S$ is \ua\ and almost integral over $R$ (since for any $f\in S$, we have $xf^n \in R$ for all $n\in \N$), but $R$ is not \ua\ since 2 is not a unit of $R$.
\end{example}

\begin{example}\label{ex:integralnonascent}
    The converse to Proposition~\ref{pr:intext} can fail, even when $R$, $S$ are integral domains finitely generated over the same field.  Let $S = K[X,Y]/(XY-1)$, where $k$ is a field over which $X,Y$ are independent indeterminates.  Let $x,y$ denote the images of $X,Y$ (respectively) in $S$, and consider the subring $R=k[x+y]$ of $S$.  Note that $x+y$ is transcendental over $k$, so that $k[x+y]$ is isomorphic to a polynomial ring in one variable over $k$.  Hence by Example~\ref{ex:first}, $R$ is \ua, whereas $S$ is \emph{not} \ua\ due to Example~\ref{ex:laurent}.  However, $S$ is integral over $R$.  To see this, note that $x, y$ are integral over $R$ via the equations $y^2 - (x+y)y+1=x^2 - (x+y)x+1=0$.
\end{example}

Next, consider the following definition, closely related to that of unit-additivity, which by Proposition~\ref{pr:utriv}(2) coincides with the condition investigated in \cite{HeiRo-utriv} and \cite{DJ-utriv} for reduced commutative rings.

\begin{defn}\label{def:utriv}\cite{Cal-UU}
A ring $R$ is \emph{UU} (units are unipotent) if $U(R) = 1+\cN(R)$.
\end{defn}

\begin{prop}\label{pr:utriv}
Let $R$ be a nonzero commutative ring.
\begin{enumerate}
    \item \cite[Theorem 2.6(1)]{DaLa-UU} If $R$ is \ut, then $\chr(R)$ is a power of $2$.
    \item If $R$ is \ut, then $R$ is \ua.
    \item \cite[Remark 2.1(1)]{Cal-UU} Suppose $R$ is reduced.  Then $R$ is \ut\ $\iff U(R) = \{1\}$.  In this case $\chr(R) = 2$.
\end{enumerate}
\end{prop}

\begin{proof}
We need only prove (2).  Let $u,v$ be units of $R$.  We have $u=1+n$, $v=1+m$, with $n,m \in \cN(R)$.  Then $u+v = 2 \cdot 1_R +m+n \in \cN(R)$, since by (1), $\chr R$ is a power of $2$.  Hence, $R$ is \ua.
\end{proof}

It turns out that unit-additivity and \uty\ are intimately connected in any nontrivial product of rings:

\begin{prop}\label{pr:product}
    Let $R=S \times T$, where $S,T$ are nonzero commutative rings. The following are equivalent: \begin{enumerate}
        \item\label{it:prRua} $R$ is \ua.
        \item\label{it:prcomps} $S$ and $T$ are \ut.
        \item\label{it:prRut} $R$ is \ut.
    \end{enumerate}
\end{prop}

\begin{proof}
The equivalence of (\ref{it:prcomps}) and (\ref{it:prRut}) is \cite[(2.2)]{DaLa-UU}.  The implication (\ref{it:prRut}) $\implies$ (\ref{it:prRua}) follows from Proposition~\ref{pr:utriv}(2).

(\ref{it:prRua}) $\implies$ (\ref{it:prcomps}):  Recall that $U(R) = U(S) \times U(T)$. Let $u \in U(S)$.  Then $(u,1), (-1,-1) \in U(R)$, so $(u-1,0) = (-1,-1) + (u,1) \in U(R) \cup \cN(R)$.  But it cannot be a unit since $0 \notin U(T)$.  Hence it must be nilpotent, so $u-1$ must be nilpotent, i.e., $u \in 1+\cN(S)$.  Thus, $S$ is \ut\ since $u \in U(S)$ was arbitrary.  By symmetry, $T$ is also \ut.
\end{proof}

It is easy to see that that the above argument is valid for an arbitrary product of two or more rings.   Hence, a factor ring of a \ua\ ring is not \ua\ in general.  However it is true for a certain class of rings:

\begin{example}\label{ex:Boolean}
Recall that a ring $R$ is called \emph{Boolean} if $x^2 = x$ for all $x\in R$. Any Boolean ring is \ut\ \cite[Theorem 4.1]{DaLa-UU}.  Moreover, any homomorphic image of a Boolean ring is Boolean, hence \ut\ and thus \ua.
\end{example}

It is natural to wonder what Euclidean domains are \ua.  For any field $k$ and indeterminate $X$ over $k$, the rings $k$ and $k[X]$ are \ua\ Euclidean domains.  The following is a partial converse that depends on the main theorems of two separate papers.

\begin{thm}\label{thm:Eucl}
Let $R$ be a \ua\ Euclidean domain.  Let $k$ be a maximal subfield of $R$, and assume $R$ is finitely generated as a $k$-algebra.  Suppose either \begin{enumerate}
    \item $k=\F_2$, or
    \item $k$ is algebraically closed.
\end{enumerate}
Then either $R \cong k$ or $R \cong k[X]$, where $X$ is an indeterminate over $k$.
\end{thm}

\begin{proof}
First suppose $k=\F_2$.  By unit-additivity, we have $U(R) = k \setminus \{0\} = \{1\}$, so that $R$ is \ut.  Then by \cite[Theorem 3.5]{HeiRo-utriv}, $R \cong k$ or $R \cong k[X]$.

Next suppose $k$ is algebraically closed.  By \cite[Theorem 1.1(A)]{Br-Euaffine}, if $R \neq k$, then $\Spec(R)$ is isomorphic to an affine open subscheme of $\PP^1_k$.  Since $k$ is algebraically closed, for any closed point $p$ of $\PP^1_k$ we have $\PP^1_k \setminus \{p\} \cong \A^1_k$.  The only open subsets of $\PP^1_k$ consist of the empty set (corresponding to the zero ring) and the complement of a finite set of closed points in $\PP^1_k$.  But since $\PP^1_k$ is not affine, the affine open subsets must then be isomorphic to removing at least one point of $\PP^1_k$, hence an affine open subset of $\A^1_k$.  But $\A^1_k = \Spec k[X]$, whereas the complement of any finite nonempty set of points of $\A^1_k$ is isomorphic to $\Spec k[X, 1/f]$ for some nonconstant $f\in k[X]$.  Let $c$ be a root of $f$, and consider the dominant $k$-scheme morphism morphism $X = \Spec k[X, 1/f] = \A^1_k \setminus \{$the roots of $f\} \rightarrow \A^1_k \setminus \{0\}$ given by $\lambda \mapsto \lambda-c$. 
 Then by Theorem~\ref{thm:geom} below, $k[X,1/f]$ is not \ua.
\end{proof}

\begin{rem}\label{rem:Schwede}
The above proof does not work when $k$ is not algebraically closed, for in that case, the complement of a closed point $p$ need not be isomorphic to $\A^1_k$, particularly if $p$ is not $k$-rational.  Thus, proper affine open subsets of $\PP^1_k$ need not be spectra of localizations of $k[X]$. For instance, let $k=\R$ and let $p$ be the closed point corresponding to the irreducible homogeneous polynomial $x^2 + y^2$. Then the affine coordinate ring of $\PP^1 \setminus \{p\}$ is the degree zero subring of $\R\left[x,y, \frac 1{x^2 + y^2}\right]$, which can be expressed as $\R\left[\frac {x^2}{x^2+y^2}, \frac{xy}{x^2 + y^2}\right] \cong \R[s,t] / (s^2 - s+t^2)$.  This is also not a counterexample, since the latter is not Euclidean, as it is not even a PID, but in any case we don't have the tools at this juncture to answer the question.
\end{rem}

In the final results of this section, we show that we may in many cases ``reduce to the reduced case'' in questions of unit-additivity, which echoes the corresponding statements for \uty\ (see \cite[Theorem 2.4]{DaLa-UU}).

\begin{lemma}\label{lem:liftunitsnilpotents}
    Let $R$ be a commutative ring and $I$ an ideal with $I \subseteq \cN(R)$.  Let $A=R/I$.  Let $x\in R$, and let $\bar x$ be its image in $A$.  Then $x \in \cN(R)$ (resp. $x\in U(R)$) if and only if $\bar x \in \cN(A)$ (resp. $\bar x\in U(A)$).
\end{lemma}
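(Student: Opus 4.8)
The plan is to establish the two stated equivalences—nilpotency and unit—independently, and in each case to note that the forward implication (descent along $R \onto A$) is immediate while all the content lies in the reverse (lifting) implication. In both lifting steps the crucial hypothesis is $I \subseteq \cN(R)$, which says precisely that every element of $I$ is nilpotent in $R$.

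For the nilpotency equivalence, I would first dispatch the easy direction: if $x^n = 0$ then $\bar x^n = \overline{x^n} = 0$. For the converse, I would suppose $\bar x \in \cN(A)$, so that $x^n \in I$ for some $n$; since $I \subseteq \cN(R)$ the element $x^n$ is itself nilpotent, say $(x^n)^m = 0$, and then $x^{nm} = 0$ gives $x \in \cN(R)$.

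For the unit equivalence, the easy direction again follows by applying the quotient map to a relation $xy = 1$. For the converse I would take $\bar x \in U(A)$ and lift an inverse: choose $y \in R$ with $\overline{xy} = 1$, so $xy = 1 + i$ for some $i \in I$. The key step is that $i$ is nilpotent (here is where $I \subseteq \cN(R)$ enters), so $1 + i$ is a unit of $R$—its inverse being the finite sum $\sum_{k \ge 0} (-i)^k$. Hence $xy$ is a unit of $R$, and $x$ therefore has the inverse $y(1+i)^{-1}$, so $x \in U(R)$.

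The lemma is elementary and I do not anticipate a genuine obstacle; the one place demanding any care is this last lifting step, where one must upgrade invertibility modulo $I$ to honest invertibility in $R$. This is exactly why the hypothesis is $I \subseteq \cN(R)$ and not merely that $I$ is an ideal: for a general ideal $I$ a unit of $R/I$ need not lift to a unit of $R$, whereas the nilpotence of $i$ makes $1+i$ invertible and the lift succeeds.
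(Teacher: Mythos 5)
Your proof is correct and follows essentially the same route as the paper's: both reduce the content to the two lifting directions, prove nilpotency lifting via $x^n \in I \subseteq \cN(R)$, and prove unit lifting by writing $xy = 1 + i$ with $i \in I$ nilpotent so that $1+i$ is a unit. The only difference is cosmetic—you make explicit the geometric-series inverse $\sum_{k}(-i)^k$ where the paper simply invokes the standard fact that one plus a nilpotent is a unit.
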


\begin{proof}
   The result follows from the well known facts that $u\in R$ is a unit if an only $u+x$ is a unit for any $x\in \cN(R)$ and that $x\in A$ is nilpotent if and only if it is the residue of a nilpotent element of $R$.
\end{proof}

\begin{prop}\label{pr:uamodnilpotents}
Let $R$ be a commutative ring and $I$ an ideal of $R$ with $I \subseteq \cN(R)$.  Then $R$ is \ua\ 
$\iff R/I$ is \ua.
\end{prop}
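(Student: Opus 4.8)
The plan is to use Lemma~\ref{lem:liftunitsnilpotents} as a dictionary translating the unit-and-nilpotent structure of $R$ into that of $A = R/I$ and back. The lemma is exactly the statement that the quotient map $R \ra A$ induces a bijection-like correspondence on units and on nilpotents: an element $x \in R$ is a unit (resp. nilpotent) if and only if its image $\bar x$ is a unit (resp. nilpotent) in $A$. Since unit-additivity is a condition phrased purely in terms of which sums of units land in $U \cup \cN$, once this dictionary is in place both implications should fall out immediately, using only that $R \ra A$ is a ring homomorphism, so that $\overline{u+v} = \bar u + \bar v$.

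For the forward direction ($R$ \ua\ $\implies A$ \ua), I would start with arbitrary units $\bar u, \bar v$ of $A$. Choosing any lifts $u, v \in R$, the lemma guarantees that $u$ and $v$ are units of $R$. Unit-additivity of $R$ then says $u+v$ is either a unit or nilpotent in $R$, and applying the lemma once more in the other direction shows $\bar u + \bar v = \overline{u+v}$ is correspondingly a unit or nilpotent in $A$. Hence $A$ is \ua. For the converse ($A$ \ua\ $\implies R$ \ua), I would run the same argument in reverse: given units $u, v$ of $R$, the lemma makes $\bar u, \bar v$ units of $A$; unit-additivity of $A$ makes $\overline{u+v}$ a unit or nilpotent in $A$; and the lemma pulls this conclusion back to $u+v$ in $R$.

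There is no real obstacle here, since Lemma~\ref{lem:liftunitsnilpotents} has already absorbed all the substantive content. The only point requiring any care is that the lifting of units \emph{and} the lifting of nilpotents both hold, and this is precisely where the hypothesis $I \subseteq \cN(R)$ is essential: without it, a unit of $A$ need not lift to a unit of $R$, and the backward implication would break. Thus the ``hard part'' is simply recognizing that the two halves of the lemma are exactly the two transfers needed, and that the defining condition of \ua\ is preserved under them.
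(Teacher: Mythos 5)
Your proposal is correct and follows exactly the paper's own argument: both directions apply Lemma~\ref{lem:liftunitsnilpotents} to transfer units (and the unit-or-nilpotent conclusion) back and forth across the quotient map, together with the fact that $\overline{x+y}=\bar x+\bar y$. Nothing is missing, and your remark that $I\subseteq\cN(R)$ is precisely what makes the lifting of units and nilpotents work matches the role the lemma plays in the paper.
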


\begin{proof}
Set $A := R/I$.
Suppose $R$ is \ua\ and let $x,y \in R$ such that $\bar x, \bar y \in U(A)$.  Then by Lemma~\ref{lem:liftunitsnilpotents}, $x,y \in U(R)$.  Thus, $x+y \in U(R) \cup \cN(R)$, so that by Lemma~\ref{lem:liftunitsnilpotents} again, $\bar x + \bar y = \overline{x+y} \in U(A) \cup \cN(A)$.

Conversely suppose $A$ is \ua\ and let $x,y \in U(R)$.  Then by Lemma~\ref{lem:liftunitsnilpotents}, $\bar x, \bar y \in U(A)$, so that $\overline{x+y} = \bar x + \bar y \in U(A) \cup \cN(A)$, so that by Lemma~\ref{lem:liftunitsnilpotents} again, $x+y \in U(R) \cup \cN(R)$.
\end{proof}

\begin{cor}
Let $A$ be a ring, $M$ an $R$-module, and $R=A(+)M$ the idealization of $M$ $($see \cite[p. 2]{NagLR}; also known as a \emph{trivial extension} of $A$ \cite{AnWi-idealization}$)$.  Then $A$ is \ua\ 
$\iff R$ is \ua.
\end{cor}
\begin{proof}
We have $R/M \cong A$ and $M^2=0$ in $R$.
\end{proof}

\section{Monoid graded rings and monoid algebras}\label{sec:mon}
In this section, $M = (M, \cdot, 1)$ is a commutative monoid.  Our goal is to characterize when $A[M]$ is unit-additive in terms of properties of $A$ and $M$.
Recall the following from, e.g., \cite[Chapters 2 and 4]{BrGu-polybook}.
\begin{defn}
Given a commutative monoid $(M, \cdot,1)$, \begin{itemize}
    \item We say $x\in M$ is a \emph{unit} if there is some $y \in M$ with $xy=1$.
    \item Let $U(M)$ denote the set (actually a group) of units of $M$.
    \item We say $M$ is \emph{positive} if $1$ is its only unit.
    \item We say $M$ is \emph{cancellative} if whenever $x,y,z \in M$ with $xz=yz$, we have $x=y$.
    \item Any element $x\in M$ that admits $n \in \N_{>0}$ such that $x^n=1$ is called a \emph{torsion} element, and the smallest such $n$ is called the \emph{order} of $x$.   If $1$ is the only torsion element of $M$, we say $M$ is \emph{torsion-free}.
    \item The set of torsion elements $T=T(M)$ form a submonoid that is a group, called the \emph{torsion subgroup} of $M$.
    \item For any prime number $p$, the set $M_p$ of elements of $M$ that are torsion elements of order $p^n$ for some $n \in \N_{\geq 0}$ is a subgroup of $T(M)$, called the \emph{$p$-torsion subgroup} of $M$.
    \item An \emph{$M$-graded ring} is a ring $R$ along with a direct sum decomposition of additive groups $R = \bigoplus_{x \in M} R_x$, subject to the rule $R_x R_y \subseteq R_{xy}$ for all $x, y \in M$ -- i.e. whenever $r \in R_x$ and $s \in R_y$, we have $rs \in R_{xy}$.
\end{itemize}
\end{defn}
It follows that if $R$ is an $M$-graded ring, then $R_1$ is a subring of $R$ and each $R_x$ is an $R_1$-module.

\begin{prop}\label{pr:graded}
Let $M$ be a commutative monoid, and let $R$ be an $M$-graded ring.  If $R$ is \ua\ then so is $R_1$, where $R_1$ is the graded component of $R$ corresponding to the identity $1\in M$.  The converse holds provided $M$ is cancellative, torsion-free, and positive.
\end{prop}

\begin{proof}
Suppose $R$ is \ua. Let $u,v \in R_1$ be units.  Then since $u,v$ are units of $R$, $u+v$ is either nilpotent, or a unit in $R$. If $u+v$ is a unit in $R$, let $w\in R$ with $(u+v)w=1$.  Then the $1$st graded component of the right hand side is $1$, whereas the $1$st graded component of the left hand side is $(u+v)w_1$, where $w_1$ is the $1$st graded component of $w$. Thus, $u+v$ is a unit in $R_1$.  It follows that $R_1$ is \ua.

Conversely suppose that $R_1$ is \ua\ and $M$ is  cancellative, torsion-free, and positive. Let $u,v$ be units of $R$.  Then by \cite[Proposition 4.10(a)]{BrGu-polybook}, we have $u=u_1 + x$ and $v=v_1 + y$, where $u_1,v_1 \in U(R_1)$ and $x,y$ are nilpotent.  We have $u+v = (u_1 + v_1) + (x+y)$, and since $R_1$ is \ua, $u_1+v_1$ is either a unit or nilpotent.  Since $x+y$ is nilpotent, it follows that $u+v$ is a unit (resp. nilpotent) if and only if $u_1 + v_1$ is a unit (resp. nilpotent).  Hence, $u+v$ is either a unit or nilpotent, so $R$ is \ua.
\end{proof}

\begin{cor}\label{cor:sg}
Let $R = A[x]$, or $A[x_1, \ldots, x_n]$, or $A[x_1, x_2, \ldots]$, or $A[M]$ where $M$ is a subsemigroup of the positive orthant of $\R^n$.  Then $R$ is \ua\ iff $A$ is \ua.
\end{cor}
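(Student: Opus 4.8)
The plan is to realize each of the four listed rings as an $N$-graded ring whose degree-zero component is $R_0 = A$, for a suitable cancellative, torsion-free, positive monoid $N$, and then to invoke Proposition~\ref{pr:graded} essentially verbatim. The only real content is to check, case by case, that the relevant grading monoid satisfies the three hypotheses of that proposition and that its zeroth graded piece is exactly $A$.

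First I would set up the gradings. The polynomial ring $A[x]$ carries its standard $\N$-grading by degree, with $R_0 = A$; likewise $A[x_1,\dots,x_n]$ is $\N^n$-graded by multidegree, and the countable-variable ring $A[x_1,x_2,\dots]$ is graded by the monoid $\bigoplus_{i\ge 1}\N$ of finitely supported exponent tuples (each monomial involves only finitely many variables), again with zeroth piece $A$. Finally, for $R=A[M]$ the semigroup ring is tautologically $M$-graded, and---interpreting $M$ as a submonoid of the closed positive orthant, so that $0\in M$ and the identity $t^0=1$ lies in $R$---the degree-$0$ component is $A\,t^0=A$. In every case $R_0=A$, as required.

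Next I would verify the monoid hypotheses, uniformly. Each grading monoid embeds into a torsion-free abelian group: $\N\hookrightarrow\Z$, $\N^n\hookrightarrow\Z^n$, $\bigoplus_{i\ge1}\N\hookrightarrow\bigoplus_{i\ge1}\Z$, and $M\hookrightarrow\R^n$. Being a submonoid of a group, each is cancellative, and being a submonoid of a torsion-free group, each is torsion-free. Positivity is equally immediate: $\N^k$ and the positive orthant $\R^n_{\ge 0}$ are positive monoids (if two tuples with nonnegative entries sum to $0$, then both are $0$), and a submonoid of a positive monoid is positive. Hence all three hypotheses of Proposition~\ref{pr:graded} hold in each case.

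With these checks in place, Proposition~\ref{pr:graded} applies directly and yields that $R$ is \ua\ if and only if $R_0=A$ is \ua, which is the claim. I do not anticipate a genuine obstacle here; the one point demanding care is the semigroup-ring case, where one must ensure $M$ contains an identity element so that $A[M]$ is a unital ring with $R_0=A$---this is precisely why working inside the (closed) positive orthant, rather than an arbitrary subsemigroup, is the correct hypothesis.
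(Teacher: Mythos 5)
Your proof is correct and matches the paper's intended argument exactly: the corollary is stated in the paper as an immediate consequence of Proposition~\ref{pr:graded}, applied to the natural gradings by $\N$, $\N^n$, $\bigoplus_{i\ge 1}\N$, and $M$ (with identity adjoined if necessary), each of which is cancellative, torsion-free, and positive with degree-zero piece $A$. Your explicit attention to the unital/identity issue in the semigroup-ring case is a point the paper leaves tacit, but it is the same proof.
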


When $A$ has prime nilradical, we will see a generalization of Corollary~\ref{cor:sg} in Proposition~\ref{pr:udimsubvars}, after the introduction of \emph{\uad}.

Next we pass to monoid algebras.  Given a ring $A$ and a multiplicative monoid $M$, we construct (see e.g. \cite[Section 4.B]{BrGu-polybook}) an $M$-graded $A$-algebra $A[M]$ in the following way:  As an $A$-module, $A[M]$ is free on the basis $M$, with $x$-graded component $A[M]_x = Ax \cong A$ as $A$-modules.  The multiplication is defined on ``monomials'' by setting $(ax) \cdot (by) := (ab)(xy)$ for $a,b \in A$ and $x,y \in M$, where the product $ab$ is as in $A$, and the product $xy$ is as in $M$.  Then one defines the rest of the multiplication on $A[M]$ by the associative law.  Note that with these rules, it follows that $1 = 1_{A[M]} = 1_A 1_M$.

First we handle the case of monoid algebras where the monoid is both cancellative and torsion-free.

\begin{prop}\label{pr:tfmonalg}
Let $A$ be a commutative ring, and let $M$ be a commutative, cancellative, torsion-free monoid.  The following are equivalent: \begin{enumerate}
    \item $R := A[M]$ is \ua.
    \item $A$ is \ua\ and $M$ is positive.
\end{enumerate}
\end{prop}

\begin{proof}
(2) $\implies$ (1): This follows from Proposition~\ref{pr:graded}.

(1) $\implies$ (2): By Proposition~\ref{pr:graded}, we need only show that $M$ is positive.  For this, first assume that $A$ is reduced.  Either $A=B\times C$ for nontrivial reduced rings $B,C$, or $\Spec A$ is connected.

In the first case, we have $A[M] \cong B[M] \times C[M]$, so by Proposition~\ref{pr:product}, $U(B[M]) = \{1\}$.  But any unit of $M$ is a unit of $B[M]$. Thus $U(M) = \{1_M\}$. That is, $M$ is positive.

In the second case, by \cite[Proposition 4.10(b)]{BrGu-polybook}, every unit of $A[M]$ is homogeneous -- i.e., every unit is of the form $ax$, with $a\in A$ and $x\in M$.  Accordingly, let $x\in M$ be a unit of $M$, hence a unit of $A[M]$.  Since $A[M]$ is \ua\ and reduced, $1-x$ must be a unit or 0.  But if it is 0, then $x=1$, and if it is a unit, then we must have $\deg_M(x) = 1$, whence $x=1$ again.  Thus, $M$ is positive.

Finally we drop the assumption that $A$ is reduced.  It is clear that $\cN(A)[M] := \bigoplus_{x\in M} \cN(A)x \subseteq \cN(A[M])$.  On the other hand, $A[M] / \cN(A)[M] \cong (A/\cN(A))[M]$, which is reduced by \cite[Theorem 4.19]{BrGu-polybook}.  Therefore $\cN(A[M]) \subseteq \cN(A)[M]$, whence $\cN(A[M]) = \cN(A)[M]$.  But by Proposition~\ref{pr:uamodnilpotents}, $(A/\cN(A))[M] \cong A[M] / \cN(A)[M] = A[M] / \cN(A[M])$ is \ua.  Then by implication (1) $\implies$ (2) in the reduced case, which we have shown above, it follows that $M$ is positive.
\end{proof}

For the main theorem of this section, we dispense with the condition of torsion-freeness.  First, though, we need a lemma on torsion subgroups of monoids.

\begin{lemma}\label{lem:torsion}
Let $(M,\cdot, 1)$ be a commutative monoid and let $G$ be a subgroup of $T(M)$.  Then $M/G$, with elements given by the cosets $\{\bar m =m+G \mid m \in M\}$ and addition given by $\bar m + \bar n = \overline{m+n}$, is a monoid.  Moreover, if $G=T(M)$, then $M/G$ is torsion-free, and if $G=M_p$ for some prime number $p$, then $M/G$ is $p$-torsion-free.
\end{lemma}

\begin{proof}
The first statement follows from \cite[Theorem 4.4]{Gil-CSR}.  The rest of the statements have the same proofs as the analogous statements for $M$ an abelian group.
\end{proof}

\begin{thm}\label{thm:monalg}
Let $A$ be a nonzero commutative ring and $M$ a cancellative commutative monoid.  Then $A[M]$ is \ua\ if and only if either \begin{enumerate}
    \item $\chr(A)=0$, $A$ is \ua, and $M$ is torsion-free and positive, or
    \item $\chr(A)$ is a positive power of a prime number $p$, $A$ is \ua, and $M/M_p$ is torsion-free and positive.
\end{enumerate}
\end{thm}

\begin{proof}
First suppose $A[M]$ is \ua. Then by Proposition~\ref{pr:graded}, so is $A$.  Thus by Corollary~\ref{cor:char}, $\chr (A)$ is either $0$ or a power of a prime.

Suppose $\chr(A)=0$. If $M$ has nontrivial torsion subgroup $T(M)$, then there is some prime number $q$ and some $t\in T(M)$ of order $q$.  That is, $H = \{t^j \mid 0 \leq j < q\}$ is a subgroup of $T(M)$ of order $q$.  Since $q=q \cdot 1_A$ is not nilpotent (since $\chr(A) =0$), then since $A$ is \ua\ we have that $q$ is a unit of $A$.  Then by \cite[Theorem 10.1]{Gil-CSR}, the element $e=q^{-1} \cdot \sum_{j=0}^{q-1} t^j$ is a nontrivial idempotent of $A[M]$.  On the other hand, since each $t^j$ is a unit and $A[M]$ is \ua, it follows that $qe$, hence also $e$, is either a unit or nilpotent.  But the only nilpotent idempotent in any ring is 0, and the only idempotent unit is 1, so $e$ is a trivial idempotent, a contradiction.

Suppose on the other hand that $\chr(A) = p^n$. By replacing $A$ by $A/pA$ (using Proposition~\ref{pr:uamodnilpotents}), we may assume $\chr(A) = p$.  Now suppose there is some $t \in T(M)$ with order $q$, where $q$ is a prime number other than $p$.  Then since $A$ contains $\F_p$ and $p \nmid q$, $q$ is a unit in $A$.  Then by \cite[Theorem 10.1]{Gil-CSR}, the element $e=q^{-1} \cdot \sum_{j=0}^{q-1} t^j$ is a nontrivial idempotent of $A[M]$.  But this is impossible by the argument we gave in the characteristic zero case.
Hence, every element of $T(M)$ has order a power of $p$.  That is, $T(M) = M_p$, so that by Lemma~\ref{lem:torsion}, $M/M_p$ is torsion-free.  Then by Proposition~\ref{pr:tfmonalg}, $M/M_p$ is positive.

Conversely, suppose condition (1) holds.  Then by Proposition~\ref{pr:tfmonalg}, $A[M]$ is \ua.  

Finally, suppose condition (2) holds. By Proposition~\ref{pr:tfmonalg},   $(A/pA)[M/M_p]$ is \ua.  Note that $(A/pA)[M/M_p] \cong (A/pA)[M] / (\{m-1 \mid m \in M_p\})$.  But the latter ideal is nilpotent.  To see this, let $m \in M_p$.  Then there is some $n$ with $0 = m^{p^n} - 1 = (m-1)^{p^n}$ since $\chr(A/pA) = p$.  Thus by Proposition~\ref{pr:uamodnilpotents}, $(A/pA)[M]$ is \ua. But $(A/pA)[M] \cong A[M] / pA[M]$, so that since $p$ is nilpotent in $A$, another application of Proposition~\ref{pr:uamodnilpotents} shows that $A[M]$ is \ua.
\end{proof}

\begin{cor}\label{cor:groupring}
Let $A$ be a nonzero commutative ring and $G$ an abelian group.  Then $A[G]$ is unit-additive if and only if $A$ is \ua\ and either \begin{enumerate}
    \item $\chr A=0$, and $G$ is trivial, or 
    \item $\chr A$ is a positive power of a prime number $p$, and $G$ is $p$-torsion.
\end{enumerate}
\end{cor}

\section{Affine algebras}\label{sec:affine}
The goal of this section is to find algebraic and geometric characterizations of when the coordinate ring of an affine variety is \ua.

\begin{thm}\label{thm:puncaone}
    Let $R$ be an integral domain that is finitely generated over a field $k$.  Let $L$ be the integral closure of $k$ in $R$. Let $A=k[t,t^{-1}]$, where $t$ is an indeterminate over $k$. Then $L$ is the unique maximal subfield of $R$ containing $k$.  Moreover, exactly one of the following is true: \begin{enumerate}
          \item $R$ is \ua, and the only $k$-algebra homomorphisms from $A$ to $R$ are the ones that send $t \mapsto \alpha$ for some $\alpha \in L^\times$ (i.e., $U(R) = L^\times$).  None of these are injective.
          \item $R$ is not \ua, and there is an injective $k$-algebra map $\phi:A \ra R$. 
    \end{enumerate}
\end{thm}

\begin{proof}
For the first statement, first note that $L$ is a field by \cite[Theorem 16]{Kap-CR}.  Now let $F$ be a field with $k \subseteq F \subseteq R$.  By  Zariski's lemma (\cite[Exercise 15 of Section 1-3]{Kap-CR} or \cite[$\mathrm{H}^n_3$ on p. 363]{Zar-newnull}), if $\m$ is a maximal ideal of $R$, $R/\m$ is a finite extension field of $k$.  But the composite $k \ra  F \ra R \ra R/\m$ embeds $F$ as a $k$-subalgebra of $R/\m$; hence $F$ is a finite extension field of $k$, whence integral. Thus, $F \subseteq L$.

Next, suppose $R$ is \ua.  Clearly all the elements of $L^\times$ are units of $R$.  Conversely, let $u$ be a unit of $R$.  Then by unit-additivity, $k[u]$ is a subfield of $R$.  Then as above, we have a tower of fields $k \hookrightarrow k[u] \hookrightarrow R/\m$, so that $k[u]$ is finite, hence integral, over $k$.  Thus, $u \in L$, so that $L$ is the field of units of $R$.  Now let $\phi: A \ra R$ be a $k$-algebra map.  Let $\alpha = \phi(t)$.  Then $\phi(t^{-1})\alpha = \phi(t^{-1})\phi(t) = \phi(t^{-1}t) = \phi(1) = 1$.  Thus, $\alpha$ is a unit of $R$, so $\alpha \in L^\times$, whence $\alpha$ is integral over $k$.  Let $g(t) \in k[t]$ be the minimal monic polynomial of $\alpha$ over $k$.  Then $g(t) \neq 0$ in $A$, but $\phi(g(t)) = g(\alpha) = 0$, so $\phi$ is not injective.

Suppose on the other hand that $R$ is not \ua.   Then there is some unit $u$ of $R$ such that $u+1$ is a nonzero nonunit.  Hence, $u \notin L$.  Define $\gamma: k[t] \ra R$ to be the unique $k$-algebra map that sends $t \mapsto u$.  Since $u$ is a unit, $\gamma$ extends uniquely to a $k$-algebra map $\phi: k[t]_t=k[t,t^{-1}] \ra R$.  Moreover, $\phi$ is injective.  To see this, suppose $\ker \phi \neq 0$ and choose $g \in \ker \phi \setminus \{0\}$.
Then there is some $m \in \Z$ and some $c \in k^\times$ such that $h = ct^mg$ is a monic polynomial in $k[t]$.  We have $h(u) = (ct^mg)(u) = cu^m g(u) = cu^m \phi(g) = 0$.  Thus, $u$ is integral over $k$, so that $u\in L$, which is a contradiction.  Therefore, $\ker \phi=0$.
\end{proof}

We now have the setup for the geometric characterization of unit-additivity in finitely generated algebras over fields.

\begin{thm}\label{thm:geom}
Let $X=\Spec R$ be the scheme associated to an irreducible variety over an algebraically closed field $k$.  Let $C = \A^1_k \setminus \{0\}$.  Then exactly one of the following is true: \begin{enumerate}
    \item $R$ is \ua, and all $k$-scheme morphisms $X \ra C$ are constant.
    \item $R$ is not \ua, and there is a dominant $k$-scheme morphism $X \ra C$.  Moreover, any such map has cofinite image. 
\end{enumerate}
\end{thm}

\begin{proof}
After translating Theorem~\ref{thm:puncaone} into geometric language, it remains to show that any dominant map $X \ra C$ has cofinite image.  By \cite[Proposition II.2.6]{Hart-AG}, we may go back and forth between the scheme-theoretic viewpoint and the more set-theoretic viewpoint of varieties over $k$ with impunity in this setting.

By Chevalley’s
constructibility theorem \cite[(6.E)]{Hart-AG}, the image $f(X)$ of $X$ in $C$ is constructible.
By \cite[(6.C)]{Hart-AG}, since $f$ is dominant, $f(X)$ contains a nonempty
open subset $U$ of $C$, and hence $C \setminus f(X) \subseteq C \setminus U$ is finite.
\end{proof}

\begin{example}\label{ex:cofinite}
It is natural to wonder whether all cofinite subsets of $\puncaone$ are possible images in the maps from Theorem~\ref{thm:geom}(2). In fact they are.  To see this, let $\{\alpha_1, \ldots, \alpha_s\}$ be an arbitrary finite subset of $\puncaone$, and let $f = \prod_{i=1}^s (t-\alpha_i)$. Let $R = k[t, \frac 1 {tf}]$, and let $\phi:A \ra R$ be the inclusion map.  Then $\Max R \cong {\mathbb A}^1_k \setminus \{ {\mathbf 0}, \alpha_1, \ldots, \alpha_s\}$, and the corresponding map $\Max R \ra \Max A$ amounts to the inclusion map.
\end{example}

\begin{rem}\label{rem:FTA}
The Fundamental Theorem of Algebra says that any polynomial map $\C \ra \C$ has a root.  If one varies the \emph{source} of such maps, one can ask: For which irreducible algebraic sets $X$ does it hold that every polynomial map $X \ra k$ has a root?  In that vein, the above results yield the following:

Let $X$ be an irreducible algebraic set over an algebraically closed field $k$, and let $R = \Gamma(X)$ be its coordinate ring.  Then $R$ is \ua\ if and only if every polynomial map $X \ra k$ has a root.
That is, $R$ is \ua\ if and only if polynomial maps on $X$ satisfy the Fundamental Theorem of Algebra.

To see this, first suppose $R$ is not \ua.  By Theorem~\ref{thm:geom}(2), there is a dominant map $\phi: X \ra k \setminus \{0\}$.  Composing with the inclusion $k \setminus \{0\} \into k$ yields a nonvanishing nonconstant polynomial map on $X$.
On the other hand, suppose $R$ is \ua, and let $g: X \ra k$ be a nonconstant polynomial map. Then $g \in R \setminus k$, so by Theorem~\ref{thm:puncaone}(1), $g$ is a nonunit of $R$.  That is, $(g)$ is not the unit ideal, so by the Nullstellensatz, $V(g) \neq \emptyset$.
\end{rem}

\section{Equations arising from elliptic curves}\label{sec:elliptic}
It is well-known \cite[Proposition III.3.1]{Sil-arithbook} that up to isomorphism (with identity on the line at infinity) the affine variety associated to an elliptic curve over a perfect field $k$ has equation \begin{equation}\label{eq:ell}
f =f(X,Y)= Y^2 + c(X)Y + p(X), 
\end{equation} with $c \in k \oplus kX$ and $-p$ is a monic cubic.
If $\chr k \neq 2$, one may replace $Y$ by $Y-\frac{c(X)}2$ to obtain the more familiar form $f=Y^2 + p(X)$.  In the following, we let $p$ be an arbitrary polynomial of odd degree except where otherwise stated.  Elliptic curves are of basic importance in algebraic geometry, coding theory, and algebraic number theory.  We show below that the affine coordinate ring of an elliptic curve is always \ua, but seldom a PID.

Much of the development below is derived from \cite{HeiRo-utriv}, as we are generalizing  \cite[Theorem 2.1]{HeiRo-utriv} in several different directions.

\begin{lemma}\label{lem:irreducible} Let $k$ be a field and let $f= Y^2+c(X)Y+p(X)$, where $p(X) \in k[X]$ is a polynomial of odd degree and $c(X) \in k[X]$ of degree at most one.  If either $c(X)$ is a constant or $p(X)$ is of degree at least three, then $f$ is an irreducible element of $k[X,Y]$.  In particular $R = k[X,Y]/(f)$ is an integral domain.
\end{lemma}
\begin{proof} If $f$ is not irreducible, then it has a factor that is a polynomial in $Y$ (with coefficients in $k[X]$) of degree 2 or less.   Since the leading coefficient of $f$ is 1, any factor of $f$ has leading coefficient a unit of $k[X]$, namely is in $k$.  Thus if $f$ factors into a pair of nonunits, both must be of the form $dY+q$ with $d\in k \setminus \{0\}$ and $q \in k[X]$.  In particular there exists $\alpha \in k[X]$ that is a root of $f$.  If $c$ is constant, then $\alpha^2+c\alpha +p = 0$, which contradicts the assumption that deg $p$ is odd.

Now suppose that $c(X)$ has degree one and $p$ has degree at least three.   If def $\alpha \leq 1$, this is clearly a contradiction to the equation $\alpha^2+c\alpha +p = 0$, since deg $p > 2$.   If deg $\alpha > 1$, then since $p$ has odd degree and since deg $\alpha^2 > $ deg $cX$, we also reach contradiction.  Thus $f$ is irreducible and the concluding statement is clear.
\end{proof}

\begin{thm}\label{thm:ua} Let $k$ be a field and let $f\in k[X,Y]$ be as in Lemma~\ref{lem:irreducible}.  Then $R= k[x,y] =k[X,Y]/(f)$ is a \ua\ integral domain.  However, if $\deg p \geq 3$ and if $p$ has a root in $k$, then $R$ is not a PID.
\end{thm}
\begin{proof} By Lemma~\ref{lem:irreducible} we know that $R$ is a domain.  Note that we can view the polynomial ring $k[x]$ as a subring of $R$. Then using the substitution $y^2 =  -cy -p$, every element of $R$ can be written in the form $a+yb$, where $a,b \in k[x]$ are uniquely determined. 
Next, we see that that the fraction field $k(x,y)$ of $R$ is the splitting field of the polynomial $Z^2 + cZ + p \in k(x)[Z]$; its roots are $y$ and $p/y$.  The polynomial is then seen to be separable since otherwise $y=p/y$, so that $p=y^2 = -cy-p$, contradicting the uniqueness of representation shown above.  Hence the field extension $k(x,y)/k(x)$ is Galois.
Let $N$ be the norm of this extension.   We claim that for $u=a+by \in R$, we have $N(u) = a^2-abc+b^2p \in k[x]$.  Recall that $N(u) = \sigma_1(u)\sigma_2(u)$, where $\sigma_i$, $i=1,2$ are the elements of the Galois group of $k(x,y) $ over $k(x)$.  Since $y$ and $p/y$ are the roots of $f$, we have $N(u) = (a+by)(a+b(p/y))= a^2 +ab(y+(p/y))+b^2p$.  However, in $k(x,y)$, $y + (p/y) = (y^2+p)/y = -cy/y=-c$, from which we deduce that $N(u) = a^2-abc+b^2p$ as claimed.

Next we prove unit-additivity.   Let $u= a+by$ be a unit of $R$. Then $N(u)$ is a unit of $k[x]$, so $a^2-abc+b^2p\in k\setminus \{0\}$.  

First we assume that $c$ is a constant, i.e., $c$ has degree zero. If deg $a>$ deg $b$, then, since $p$ has odd degree, deg $(a^2-abc+b^2p) = \max\{$deg $a^2$, deg $b^2p\}$.   Conversely, if deg $b \geq$ deg $a$, then deg ($a^2-ab+b^2p$) = deg $b^2p$.  In either case both $a$ and $b^2p$ have degree zero.  Since $p$ has positive degree, we must have $b=0$ and $a \in k$ which means that $u =a \in k.$  Hence $R$ is \ua\ with field of units $k$.

Next we assume that $c$ has degree one and $p$ has degree at least three.
We break into cases:  First suppose deg $a \leq $ deg $b$; then, since deg $p \geq 3$, deg$(a^2-abc+b^2p) =$ deg $b^2p$.  Thus $b = 0$, so as before $u=a\in k$

Finally suppose that deg $a >$ deg $b$; then deg $a^2 \geq$ deg $abc$.
   If the inequality is strict then, because $p$ is of odd degree, one of deg $a^2$ or deg $b^2p$ is larger than the other, so
   deg$(a^2-abc+b^2p)  = \max \{$deg  $a^2$, deg $b^2p\}$.  Thus we are in the earlier case and $u\in k$.
Finally assume that deg $a^2 =$ deg $abc$.  Then deg  $a  = 1+$ deg $b$.  But, since deg $p \geq 3$, we have, if $b\neq 0$, deg $b^2p = $2deg $b$ + deg $p  > $ 2deg $a =$ deg $a^2$.  Thus we have deg$(a^2-abx+b^2p) =$ deg $b^2p \neq 0$. This contradiction again implies $b=0 $ and so $u\in k$, whence $R$ is unit-additive.

For the PID statement, suppose $\lambda \in k$ with $p(\lambda)=0$ and $\deg p \geq 3$.  Then $f \in (X-\lambda, Y)k[X,Y]$, so that $\m= (x-\lambda,y)$ is a proper ideal of $R$.  Suppose $\m$ is principal, hence $\m=(a+by)$ for some $a,b \in k[x]$.  Then $x-\lambda = (a+by)g$ and $y=(a+by)h$ for some $g,h \in R$.  Applying norms to the first equation, we have $(x-\lambda)^2 = (a^2 -abc+b^2 p)N(g)$.  Hence the right hand side has degree 2. However, by the work above $\deg(a^2 -abc+b^2p) = \max\{\deg a^2, \deg b^2 p\}$ and since $\deg p \geq 3$, it follows that $b=0$.  Thus, $y=ah$.  Say $h=u+vy$ with $u,v \in k[x]$.  Then $y=au + avy$, so that by uniqueness of representation we have $av=1$, so that $a \in k^\times$ and $\m = (a) = R$, a contradiction.
\end{proof}

\begin{rem}
We point out here some overlap between Theorem~\ref{thm:ua} and a result of Ford \cite[Proposition 3.7]{Fo-unitaffine}.  Assuming that $R$ is finitely generated over an algebraically closed field $k$, we have by Theorem~\ref{thm:geom} that $R$ is \ua\ $\iff U(R) = k^\times$.  Ford's result assumes $k=\C$ and that the polynomial $p$ is ``sufficiently general'', but his result permits $p$ to have any degree $\geq 3$, odd or even.  Then for such a polynomial $f = y^2-p(x)$, he concludes that $U(R) = k^\times$, which by our result is equivalent to unit-additivity.
\end{rem}

\begin{rem}
For the last statement of Theorem~\ref{thm:ua}, the condition that $p$ has a root in $k$ cannot be omitted.  Indeed, Brown \cite[Theorem 1.1(B)]{Br-Euaffine} provides three examples of non-Euclidean principal ideal domains of the form $k[X,Y]/(Y^2 +Y+p)$, and one of the form $k[X,Y] / (Y^2 + p)$, where $p \in k[X]$ of degree 3 or 5 and $k$ is a field of 2, 3, or 4 elements.
\end{rem}

\begin{cor}\label{cor:elliptic}
    Let $k$ be a perfect field, and let $R$ be the affine coordinate ring of an elliptic curve over $k$.  Then $R$ is a \ua\ domain whose field of units is $k$.
\end{cor}

\begin{cor}\label{cor:ellipticFTA}
Let $k$ be an algebraically closed field, let $E$ be an elliptic curve over $k$ and set $D := E \cap \A^2_k$.  Then any nonconstant polynomial mapping on $D$ admits a root.
\end{cor}

\begin{proof}
    Apply Corollary~\ref{cor:elliptic} in light of Remark~\ref{rem:FTA}.
\end{proof}

\begin{rem}\label{rem:ellbyag}
Corollary~\ref{cor:ellipticFTA} must be well-known to experts, as it follows from elementary algebraic geometry.  Indeed, let $R$ be a 1-dimensional integral domain finitely generated over $k=k^\alg$, let $D \subseteq \A^n_k$ be the corresponding affine variety, and let $E \subseteq \PP^n_k$ be its projective closure. \emph{Assume $E \setminus D$ is a singleton set $\{e\}$} (as is the case with elliptic curves as defined here). Let $f$ be a nonconstant polynomial mapping on $D$.  Since $f \notin k$, $f$ cannot be globally defined on $E$, as $\cO(E) = k$ by \cite[Theorem I.3.4]{Hart-AG}. Thus, the corresponding rational map $f \in K(E)= \Frac R$ must have a pole. However, as $f$ is defined on all of $D$, the pole must be at $e$. Similarly, $1/f \in K(E) \setminus \cO(E)$ must have a pole, which cannot be at $e$ so is at some point $d$ of $D$.  Hence, $f(d)=0$.
\end{rem}

\begin{rem}\label{rem:Ded}
It would go too far afield to show precisely which polynomials $f$ as in Lemma~\ref{lem:irreducible} give rise to Dedekind domains.  However, one can quite easily show the following, using the Jacobian criterion (see e.g. \cite[Theorem 4.4.9]{HuSw-book}):

Let $k$ be a perfect field, and let $d$ be an odd positive integer.  For any $(d+2)$-tuple $\gamma=(a,b,c_1, \ldots, c_d)$, let $f_\gamma := Y^2 + aXY + bY - (X^d + c_1 X^{d-1} + \cdots + c_{d-1}X + c_d)$.  Then there is a Zariski-dense subset $D$ of $k^{d+2}$ such that $k[X,Y]/(f_\gamma)$ is a Dedekind domain if and only if $\gamma \in D$.
\end{rem}

\begin{rem}
The case handled in \cite[Theorem 2.1]{HeiRo-utriv} is mostly subsumed by Lemma~\ref{lem:irreducible}, Theorem~\ref{thm:ua}, and Remark~\ref{rem:Ded} above.  The only case of that theorem not handled above is where $\deg p=1$.  But if $f=Y^2 + Y + p$ with $\deg p=1$, say $p(X) = -aX+b$ with $a \in k^\times$, then $k[X,Y] / (Y^2 + Y + p) \cong k[Y]$ by sending $X \mapsto (Y^2 + Y + b)/a$, which is clearly a \ua\ Euclidean domain. 
\end{rem}

In many examples of \ua\ domains $R$ from this paper, we have that there is a surjection from $R$ to the field of units.  The following is a counterexample.

\begin{example}\label{ex:HRredux}
Let $R$ and $f$ be as in Theorem~\ref{thm:ua}, with $k
=\F_2$, $c(X)=1$, and $p(X) =X^3+X^2+1$.   Then by the results of this section (or \cite[Theorem 2.1]{HeiRo-utriv}), $R$ is \ut, hence \ua\ with field of units $\F_2$.  On the other hand, by \cite[Example 2.3]{HeiRo-utriv} every residue field of $R$ contains at least four elements, so there is no map from $R$ onto $\F_2$.  The same reference shows that $R$ is a PID that is not a Euclidean domain.
\end{example}

\section{Unit dimension}\label{sec:uadim}

Let $R$ be a ring whose nilradical is a prime ideal, such that $R$ is not unit-additive. If $W$ denotes the non nilpotent sums of elements in $U(R)$, then $W$ is a multiplicatively closed subset of $R$ (see Lemma~\ref{lem:mcsums}).  It is natural to ask if the ring $W^{-1}R$ is unit-additive.  Unfortunately, this is not true in general as localization may create more units as our next example shows.   

\begin{example}
Let $R=\Z[X,2/X]$.  Then $U(R) = \{1,-1\}$.  Hence if $W$ is as above, $W = \Z \setminus \{0\}$.  In particular $W^{-1}R = \Q[X,1/X]$.  Thus $x$ is a unit of $W^{-1}R$, yet $1+X$ is not. 
\end{example}

However, we do have the following result.  Recall (see e.g. \cite[Example 4]{Mats}) that the \emph{saturation} $\tilde W$ of a multiplicative set $W \subseteq R$ consists of all divisors of every element of $W$.  That is, $\tilde W := \{x \in R \mid \exists y\in R$ such that $xy \in W\}$. We say $W$ is \emph{saturated} if $W=\tilde W$.  Recall also that $\tilde W$ is always a multiplicative set, and that $W^{-1}R \cong \tilde W^{-1}R$ canonically. 

\begin{prop}\label{pr:satsumua}
Let $R$ be a ring whose nilradical $\cN(R)$ is a prime ideal.  Let $W$ be a saturated multiplicative set that contains no nilpotent elements.  Then $T := W^{-1}R$ is \ua $\iff$ $W$ is closed under non-nilpotent sums.
\end{prop}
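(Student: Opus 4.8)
The plan is to reduce the whole statement to two clean descriptions of the localization $T = W^{-1}R$: a description of its units and a description of its nilpotents, each phrased in terms of $W$ and $\cN(R)$. I would first record that for $r \in R$, the image $r/1$ is a unit of $T$ if and only if $r \in W$. One direction is trivial ($r \in W$ gives the inverse $1/r$). Conversely, if $r/1$ is a unit, then there are $y \in R$ and $w, w' \in W$ with $w'(ry - w) = 0$, hence $r \cdot (w'y) = w'w \in W$; so $r$ divides an element of $W$, and because $W$ is saturated, $r \in \widetilde{W} = W$. Since every $w \in W$ already maps to a unit of $T$, an arbitrary $r/w \in T$ is a unit iff $r \in W$; thus $U(T) = \{a/w : a, w \in W\}$.

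Next I would show that $r/w \in \cN(T)$ if and only if $r \in \cN(R)$. Again the backward direction is immediate. For the forward direction, $(r/w)^n = 0$ in $T$ yields some $w' \in W$ with $w' r^n = 0$, whence $(w'r)^n = 0$, so $w'r \in \cN(R)$. Here is where the hypotheses enter: $w'$ is non-nilpotent because $W$ contains no nilpotents, so $w' \notin \cN(R)$, and since $\cN(R)$ is prime, $r \in \cN(R)$. In particular $(x+y)/1 \in \cN(T)$ iff $x+y$ is nilpotent.

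With these two characterizations the two implications are short. For ($\Leftarrow$), given units $u = a/w_1$ and $v = b/w_2$ with $a,b,w_1,w_2 \in W$, I write $u+v = (aw_2 + bw_1)/(w_1 w_2)$, where $aw_2, bw_1 \in W$. If the numerator $aw_2 + bw_1$ is non-nilpotent, then by the hypothesis that $W$ is closed under non-nilpotent sums it lies in $W$, so $u+v \in U(T)$; if it is nilpotent, then $u+v \in \cN(T)$. Hence $T$ is \ua. For ($\Rightarrow$), given $x,y \in W$ with $x+y$ non-nilpotent, note $x/1, y/1 \in U(T)$, so unit-additivity forces $(x+y)/1 \in U(T) \cup \cN(T)$; since $x+y$ is non-nilpotent it is not in $\cN(T)$, so $(x+y)/1 \in U(T)$, and therefore $x+y \in W$.

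I expect the main obstacle to be entirely contained in the two structural lemmas above rather than in the logical equivalence, which is formal. Concretely, the primeness of $\cN(R)$ is exactly what lets me pass from $w'r \in \cN(R)$ with $w'$ non-nilpotent to $r \in \cN(R)$, and the saturation of $W$ is exactly what upgrades ``$r$ divides an element of $W$'' to ``$r \in W$.'' Once those two hypotheses are used in these precise spots, the rest of the argument is bookkeeping with fractions.
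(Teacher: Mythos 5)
Your proposal is correct and follows essentially the same route as the paper's proof: both rely on saturation of $W$ to show that a fraction in $T$ is a unit exactly when its numerator lies in $W$, and on primeness of $\cN(R)$ together with $W \cap \cN(R) = \emptyset$ to show that a fraction is nilpotent exactly when its numerator is, after which both implications are routine fraction bookkeeping. The only difference is organizational: you factor these two facts out as standalone characterizations of $U(T)$ and $\cN(T)$ before proving the equivalence, whereas the paper establishes and uses the same facts inline within each direction.
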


\begin{proof}
Suppose $W$ is closed under non-nilpotent sums.  First note that if $a/s \in T$ is a unit of $T$ (where $a\in R$ and $s \in W$), then $a \in W$ since $W$ is saturated and $a/1$ is a unit of $T$.    Suppose that  $\frac du = \frac as + \frac bw$, where $a/s$ and $b/w $ are units of $T$.  Assume $d/u$ is non-nilpotent.  Then there is some $x\in W$ such that $xswd = xuaw+xubs$.   As noted $a, b \in W$ and so $xuaw + xubs =xswd$ is a sum of elements of $W$, and hence is either nilpotent or in $W$.  But $xsw \in W$, so is non-nilpotent, and if $d$ is nilpotent then so is $d/u$, contradicting our assumption.  Since neither $xsw$ nor $d$ are in $\cN(R)$ and since the latter is a prime ideal, we have that $xswd \notin \cN(R)$.   Hence by our assumption on $W$, $xswd \in W$.      Since $W$ is saturated, $d \in W$ and so $d/u$ is a unit of $T$.

Suppose conversely that $T$ is \ua.  Let $v, w \in W$ such that $v+w$ is non-nilpotent.  If $\frac{v+w}1$ is nilpotent, then since $\cN(W^{-1}R) = W^{-1}\cN(R)$, there is some $u \in W$ with $u(v+w) \in \cN(R)$, but since $\cN(R)$ is prime and $u \notin \cN(R)$, it follows that $v+w \in \cN(R)$, contrary to assumption.  Hence, $\frac v1 + \frac w1 = \frac{v+w}1$ is non-nilpotent in $T$, so since $T$ is \ua, $\frac{v+w}1$ must be a unit.  But then there is some $\frac bu \in T$, with $b\in R$ and $u \in W$, such that $\frac{b(v+w)}u = \frac bu \frac {v+w}1 = 1_T = \frac 11$.  Thus, there is some $x \in W$ with $xb(v+w) = xu$. But since $xu \in W$, it follows that $v+w \in \tilde W = W$.
\end{proof}

\begin{lemma}\label{lem:mcsums}
Let $R$ be a ring, and let $U,V$ be multiplicatively closed subsets of $R$.  Let $W$ be the set of finite sums of elements of $V$ that lie in $U$.  That is, $W = \{v_1 + \cdots + v_n \mid n \in \N, v_j \in V \text{ for all } 1 \leq j \leq n\} \cap U$.  Then $W$ is multiplicatively closed.
In particular, the finite sums of elements of a multiplicative set that avoid a given prime ideal comprise a multiplicative set.
\end{lemma}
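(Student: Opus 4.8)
The plan is to isolate the only substantive part and reduce to it. Let $\Sigma(V)$ denote the set of all finite sums $v_1 + \cdots + v_n$ with $n \in \N$ and each $v_j \in V$, so that by definition $W = \Sigma(V) \cap U$. The key observation is that the intersection of two multiplicatively closed subsets of $R$ is again multiplicatively closed: if $a,b \in \Sigma(V) \cap U$, then $ab \in \Sigma(V)$ (closure of $\Sigma(V)$ under products) and $ab \in U$ (closure of $U$), whence $ab \in W$. Thus everything comes down to verifying that $\Sigma(V)$ itself is closed under multiplication; the hypothesis on $U$ then contributes nothing beyond its own multiplicative closure.

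First I would take two arbitrary elements $s = \sum_{i=1}^{m} v_i$ and $t = \sum_{j=1}^{n} w_j$ of $\Sigma(V)$, with all $v_i, w_j \in V$, and simply expand the product by distributivity:
\[
st = \left(\sum_{i=1}^{m} v_i\right)\left(\sum_{j=1}^{n} w_j\right) = \sum_{i=1}^{m}\sum_{j=1}^{n} v_i w_j .
\]
Since $V$ is multiplicatively closed, each cross-term $v_i w_j$ lies in $V$, so the right-hand side exhibits $st$ as a finite sum (of $mn$ terms) of elements of $V$. Hence $st \in \Sigma(V)$, which is exactly the claim, and then $W = \Sigma(V) \cap U$ is multiplicatively closed as explained above.

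For the ``in particular'' assertion, given a prime ideal $\p$ of $R$ and a multiplicative set $V$, I would set $U = R \setminus \p$; this is multiplicatively closed precisely because $\p$ is prime (if $ab \in \p$ then $a \in \p$ or $b \in \p$, so $a,b \notin \p$ forces $ab \notin \p$). Then $W = \Sigma(V) \cap U$ is exactly the collection of finite sums of elements of $V$ that avoid $\p$, and it is multiplicatively closed by the general statement just proved. The only point requiring any care — and it is a mild one — is the bookkeeping of the double sum and the appeal to multiplicative closure of $V$ to keep each product $v_i w_j$ inside $V$; beyond that there is no genuine obstacle, since the argument is entirely formal.
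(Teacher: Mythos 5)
Your proof is correct and follows essentially the same route as the paper's: expand the product of two finite sums by distributivity, use multiplicative closure of $V$ on each cross-term $v_iw_j$, and use multiplicative closure of $U$ to keep the product in $U$. Your packaging via $\Sigma(V)$ and the explicit treatment of the ``in particular'' case (taking $U = R \setminus \p$) are just slightly more structured presentations of the identical argument.
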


\begin{proof}
Let $a:=v_1 + \cdots + v_n$, $b:= t_1 + \cdots + t_k \in W$, where $v_i, t_j \in V$ for all $i,j$.  Then $ab = \sum_{i=1}^n \sum_{j=1}^k v_i t_j$.  Since $V$ is multiplicatively closed, each pairwise product $v_i t_j \in V$.  Thus, $ab$ is a finite sum of elements in $V$.  But also since $a,b \in U$, we have $ab \in U$ since $U$ is a multiplicative set.  Hence, $ab \in W$.
\end{proof}

Of course if $R$ is not \ua, then we can repeat the process of taking non nilpotent sums of units and then take the saturation of that set.   Thus we have the following definition.

\begin{defn}\label{def:udim}
Let $R$ be a ring whose nilradical is a prime ideal.  Set $W_0 = \{1\}$, and $V_0 = \widetilde{W_0} = U(R)$.  For each $i \geq 1$, we inductively define $W_i$ and $V_i$ by setting $W_i := $ the non-nilpotent sums of elements of $V_{i-1}$ (which is multiplicatively closed by Lemma~\ref{lem:mcsums}), and $V_i := \widetilde{W_i}$.
 We say the \emph{\uad} of $R$ is $n$, written $\udim R = n$, if $n$ is the smallest nonnegative integer such that $W^{-1}_nR$ is \ua.  (So, $\udim R = 0 \iff R$ is \ua.)  If there is no such $n$, we say $R$ has \emph{infinite} \uad, written $\udim R = \infty$. By Proposition~\ref{pr:satsumua}, $\udim R = \inf\{i\geq 0 \mid V_i = W_{i+1}\}.$
\end{defn}

In our next example we show that for any $n \in \N_0$, there is an integral domain $R$ with $\udim R = n$, and moreover there is a domain with $\udim R = \infty$.

\begin{example}\label{ex:uadimisdim}
We show that for each $n= 1,2...$, there exists an integral domain, finitely generated over a field (and isomorphic to a localization of a polynomial ring over a field at a single element),  whose dimension and \uad\ are both equal to $n$.  Let $k$ be a field and let      $$R= k[X_1,X_1^{-1}, X_2, (1+X_1)/X_2,...,X_n,(1+X_{n-1})/X_n].$$   Let $W_i$ and $V_i$ be as in the above definition.  It follows that $W_1$ consists of all Laurent polynomials in $X_1$ (over $k$) and that $V_1$ contains $X_2$, since $X_2(1+X_1)/X_2 = 1+X_1 \in W_1$.  Continuing in this fashion, we see that for each $i\leq n$, $W_i$ contains all polynomials in $\{X_1,X_2,...,X_i\}$, but not $X_{i+1}$, whereas $V_i$ contains $X_{i+1}$.  In fact, 
   \[
   W_i^{-1}R = k(X_1,X_2,...,X_i)[X_{i+1},X_{i+1}^{-1},..., X_n,(1+X_{n-1})/X_n].
   \]
 It follows that $\udim R =n$.

 To see that $R$ is isomorphic to the localization of a polynomial ring at a single element, let $D = k[Y_1, \ldots, Y_n]$, and consider the $k$-algebra  map $\phi: D \rightarrow R$ that sends $Y_n \mapsto X_n$ and for each $1 \leq j < n$, $Y_j \mapsto \frac{1+X_j}{X_{j+1}}$.  We define a sequence of polynomials $f_n, f_{n-1}, \ldots, f_1 \in D$ by descending induction by setting $f_n := Y_n$ and for each $1 \leq j <n$, $f_j := Y_j f_{j+1} -1$.  Note that $\phi(f_j) = X_j$ for all $j$.  Hence, $\phi(f_1)=X_1$ is a unit of $R$, so $\phi$ extends uniquely to a map $\psi: D[1/f_1] \ra R$.  On the other hand, setting $A := k[X_1, \ldots, X_n]$, let $\mu: A \ra D$ be the $k$-algebra map that sends each $X_j \mapsto f_j$.  Then $\mu(X_n)=f_n$, and for each $1\leq j < n$, we have $\mu(1+X_j) = 1+f_j = Y_j f_{j+1} = Y_j \mu(X_{j+1})$, so that $\mu$ extends naturally to $\nu: R \ra D[1/f_1]$.  We have $\nu(1/X_1) = 1/f_1$, and for each $1\leq j <n$, we have $\nu\left(\frac {1+X_j}{X_{j+1}}\right) = Y_j$.  We claim that $\psi$ and $\nu$ are inverses of each other, from which we get $R \cong D[1/f_1]$ as $k$-algebras.  For this, we need only see that $\psi$ and $\nu$ behave properly on the $X_j$s and $Y_j$s.  Accordingly, $\psi(\nu(X_j)) = \psi(f_j) = X_j$ for all $j$, $\nu(\psi(Y_n)) = \nu(X_n) = f_n=Y_n$, and for each $1\leq j <n$, we have $\nu(\psi(Y_j)) = \nu\left(\frac{1+X_j}{X_{j+1}}\right) = Y_j$.  It follows that $\dim R=n$ as well.
\end{example}

\begin{example}\label{ex:inftyua}
 On the other hand we can extend the construction of $R$ above to show that there are rings with infinite \uad, albeit ones that are not finitely generated over a field. Let $X$ be a countably infinite set of variables indexed by the natural numbers.
 Let 
   $$ R = k[X_1, X_1^{-1}, X_2,(1+X_1)/X_2,...(1+X_i)/X_{i+1},...].$$ 
Clearly $\udim R \neq n$ for any $n\in \N_0$, whence $\udim R = \infty$.   
\end{example}

In our next Theorem, we prove that unlike the ring 
in Example~\ref{ex:inftyua}, any domain that is \emph{finitely generated} over a field has finite \uad.

\begin{thm}\label{thm:uadimbound}
Let $R$ be an integral domain that is a finitely generated algebra over a field. Then $\udim R \leq \dim R$.
\end{thm}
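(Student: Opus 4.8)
The plan is to track the ascending chain of overrings $R = R_0 \subseteq R_1 \subseteq \cdots \subseteq K := \Frac R$, where $R_i := W_i^{-1}R = V_i^{-1}R$ (these coincide since $V_i = \widetilde{W_i}$ and a localization is unchanged upon saturating). Since $R$ is a domain, $\cN(R) = 0$ is prime, so the \uad\ machinery of Definition~\ref{def:udim} applies and ``non-nilpotent'' means ``nonzero''; moreover $\udim R$ is the least $i$ for which $R_i$ is \ua. I will show that $R_d$ is \ua, where $d = \dim R = \operatorname{trdeg}_k K$ (the last equality being standard for finitely generated domains over a field $k$). As every localization $R_i$ of the Noetherian domain $R$ is again a Noetherian domain with fraction field $K$, it suffices to bound how long the chain can strictly increase.

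The key is the following inductive claim: \emph{if $R_0, R_1, \ldots, R_i$ are all non-\ua, then there exist $w_1, \ldots, w_{i+1} \in R_{i+1}$, algebraically independent over $k$, with $k(w_1, \ldots, w_{i+1}) \subseteq R_{i+1}$.} For the inductive step, set $F := k(w_1, \ldots, w_i) \subseteq R_i$ (with $F = k$ when $i=0$). Since $R_i$ is a domain that is not \ua, Proposition~\ref{pr:ua} supplies a unit $w_{i+1}$ of $R_i$ for which $w_{i+1} + 1$ is a nonzero nonunit. First I would argue that $w_{i+1}$ is transcendental over $F$: otherwise $F[w_{i+1}, w_{i+1}^{-1}] = F[w_{i+1}]$ would be a finite extension field of $F$ sitting inside $R_i$, forcing $w_{i+1}+1$ to be $0$ or a unit of $R_i$, a contradiction. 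Hence $w_1, \ldots, w_{i+1}$ are algebraically independent over $k$. Next I would check that every nonzero element of $F[w_{i+1}, w_{i+1}^{-1}]$ is a nonzero sum of units of $R_i$ (each monomial $c\, w_{i+1}^j$ with $c \in F^\times$ is a unit of $R_i$), and that any nonzero sum of units of $R_i$ becomes a unit in $R_{i+1}$: writing such a sum over a common denominator, the numerator is a nonzero sum of elements of $V_i = U(R_i) \cap R$, hence lies in $W_{i+1}$ and is inverted in $R_{i+1}$, while the denominator is already a unit. Therefore $F(w_{i+1}) = k(w_1, \ldots, w_{i+1}) \subseteq R_{i+1}$, completing the step.

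With the claim in hand, the dimension bound is routine. Supposing $R_0, \ldots, R_i$ are non-\ua, put $F' := k(w_1, \ldots, w_{i+1}) \subseteq R_{i+1}$ and $B := F'[R]$, a finitely generated $F'$-subalgebra of $K$ with $\Frac B = K$. Since $R_{i+1}$ is a localization of $R$ containing $F'$, it is in fact a localization of $B$, so
\[
\dim R_{i+1} \leq \dim B = \operatorname{trdeg}_{F'} K = \operatorname{trdeg}_k K - \operatorname{trdeg}_k F' = d - (i+1).
\]
Consequently, if $R_0, \ldots, R_{d-1}$ were all non-\ua, then $\dim R_d = 0$, so the Noetherian domain $R_d$ would be a field and hence \ua; and of course if some earlier $R_j$ is already \ua\ we are done sooner. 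Either way $\udim R \leq d$.

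The routine parts are the transcendence-degree bookkeeping and the standard identity $\dim R = \operatorname{trdeg}_k K$. I expect the main obstacle to be the inductive claim --- in particular, upgrading the transcendence of the newly produced unit from ``over $k$'' to ``over the accumulated field $F$'' (so that the transcendence degree genuinely increases at each strict step, rather than merely staying positive), together with the bookkeeping identifying $W_{i+1}$ with the nonzero sums of units of $R_i$ so as to guarantee $F(w_{i+1}) \subseteq R_{i+1}$.
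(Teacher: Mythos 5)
Your proof is correct, but it takes a genuinely different route from the paper's. The paper argues by induction on $d$: using Zorn's lemma it fixes a subfield $k \subseteq R$ maximal among subfields, and for a unit $f \notin k$ it invokes the Nullstellensatz over the algebraic closure $k^{\alg}$ together with a Galois-orbit and purely-inseparable-descent argument to produce a monic $H \in k[Y]$ such that $H(f)$ is a nonzero sum of units lying in any prescribed maximal ideal $\m$ of $R$; hence every maximal ideal of $R$ meets $W_1$, so $\dim W_1^{-1}R < \dim R$, and the induction restarts over the new base field $L$ generated by $k$, $W_1$, and $W_1^{-1}$. You instead run a single induction along the chain $R_i = W_i^{-1}R$, and your key step is a one-line piece of field theory: if a unit $u$ of a domain were algebraic over a subfield $F$, then $F[u]$ would be a field, forcing $u+1$ to be zero or a unit. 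This replaces the paper's entire Nullstellensatz/Galois apparatus, shows the new unit is transcendental over the accumulated field $k(w_1, \ldots, w_i)$, and then $k(w_1, \ldots, w_{i+1}) \subseteq R_{i+1}$ plus additivity of transcendence degree gives $\dim R_{i+1} \le d-(i+1)$. Your bookkeeping all checks out: $V_i = U(R_i) \cap R$ does hold in a domain, a nonzero sum of units of $R_i$ does land in $W_{i+1}$ after clearing a denominator from $W_i \subseteq W_{i+1}$, and $R_{i+1}$ is indeed a localization of $B = F'[R]$. What your approach buys: it is elementary and characteristic-blind (no algebraic closure, no separability cases, no maximal-subfield construction), and it bounds the original chain directly, sidestepping the re-basing step the paper leaves implicit (applying the inductive hypothesis to $W_1^{-1}R$ over $L$ and matching its $W_j$-chain with the shifted chain of $R$). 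What the paper's heavier machinery buys: the sharper intermediate fact that a single localization step kills \emph{every} maximal ideal of $R$, yielding the strict descent $\dim W_i^{-1}R < \dim W_{i-1}^{-1}R$ that is cited verbatim in the last section's proposition that $R^{\uasym} = \Frac R$ when $\udim R = \dim R$; your argument recovers what that proposition actually needs (namely $\dim R_d = 0$) but not that pointwise statement about $\Max R$.
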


\begin{proof}
We proceed by induction on $d=\dim R$.  When $d=0$, $R$ must itself be a field (as it is a zero-dimensional integral domain), which is \ua, hence $\udim R=0$.

Now let $d>0$ and assume we have proved the result for smaller dimensions.  Let $A$ be the subring of $R$ generated by its units.  Since $W_1$ (in the notation of Definition~\ref{def:udim}) is closed under products and nonzero sums, we have $A = W_1 \cup \{0\}$. If $A$ is itself a field, then $\udim R = 0 < d$, so we are done.  

If not, then observe that for any maximal ideal $\m$ of $R$, $A \cap \m$ is a maximal ideal of $A$.  To see this, note by Zariski's Lemma 
that the composition $k \ra A \ra R \ra R/\m$ must be a finite field extension, hence an integral extension.  This extension also factors as a pair of injective maps $k \ra A/(\m \cap A) \ra R/\m$.  Thus, $A/(\m \cap A)$ is an integral domain that is integral over a field, so by \cite[Theorem 16]{Kap-CR}, $A/(\m\cap A)$ must be a field.  Since $A$ is not a field, we have $\m \cap A \neq 0$ -- i.e., $\m \cap W_1 \neq \emptyset$.  Since every maximal ideal of $R$ intersects $W_1$, we have $\dim W_1^{-1}R \leq \dim R -1$.  Also, $L=W_1^{-1}A$ is a field, and $W_1^{-1}R$ is an integral domain finitely generated as an $L$-algebra (by the same elements that generate $R$ as a $k$-algebra). Then by inductive hypothesis, we have \[
\udim R = \udim(W_1^{-1}R) + 1 \leq \dim(W_1^{-1}R) +1 \leq \dim R. \qedhere
\]
\end{proof}

On the other hand, the difference between Krull dimension and \uad\ can be arbitrarily large, as we can glean from the following result and its corollary.

\begin{prop}\label{pr:udimsubvars}
Let $D$ be an integral domain, let $\mathbf{X} = \{X_i\}_{i \in \Lambda}$ be a set of algebraically independent indeterminates over $D$, and let $R$      be a $D$-subalgebra of $D[\mathbf{X}]$.  Then $\udim R = \udim D$.
\end{prop}

\begin{proof}
  Clearly $W_0(D) = W_0(R)= \{1\}$, and since every unit of $R$ is in $D$, we have $V_0(D) = U(D) = U(R) = V_0(R)$.   Finally observe that any divisor of an element of $D$ is already in $D$.  Hence by a straightforward induction argument, $W_i(R) = W_i(D)$ and $V_i(R) = V_i(D)$ for all $i$.  The result follows.
\end{proof}

Then from standard facts about polynomial algebras, we have the following:

\begin{cor}\label{cor:udimvars}
Let $D$ be an integral domain, let $\mathbf{X} = \{X_i\}_{i \in \Lambda}$ be a set of algebraically independent indeterminates over $D$, and set $S=D[\mathbf{X}]$.  Then $\dim S \geq \dim D + |\Lambda|$, with equality if $D$ is Noetherian.  Hence if $D$ is a finitely generated algebra over a field, we have $\dim S \geq \udim S + |\Lambda|$.
\end{cor}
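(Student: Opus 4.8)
The plan is to separate the claim into two parts: the purely dimension-theoretic inequality $\dim S \geq \dim D + |\Lambda|$ (together with equality when $D$ is Noetherian), and then the short deduction of the unit-additivity statement from it using the results already established.

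For the dimension inequality, I would reduce to finitely many variables and invoke standard facts about Krull dimension of polynomial rings. For the lower bound I would exhibit an explicit chain of primes: given a chain $\p_0 \subsetneq \cdots \subsetneq \p_d$ in $D$ with $d = \dim D$ and finitely many of the indeterminates $X_1, \ldots, X_n$, the chain
\[
\p_0 S \subsetneq \cdots \subsetneq \p_d S \subsetneq \p_d S + (X_1) \subsetneq \cdots \subsetneq \p_d S + (X_1, \ldots, X_n)
\]
is strictly increasing and consists of prime ideals, since modding $S$ out by any one of these ideals yields a polynomial ring over a domain. This chain has length $d + n$, so $\dim S \geq \dim D + n$; taking $n = |\Lambda|$ handles the finite case, and letting $n$ grow without bound shows $\dim S = \infty$ when $\Lambda$ is infinite (so that the inequality holds with both sides infinite). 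For the reverse inequality when $D$ is Noetherian, I would cite the classical theorem that $\dim R[X] = \dim R + 1$ for Noetherian $R$ and induct on $|\Lambda|$ in the finite case, again treating infinite $\Lambda$ by observing both sides are $\infty$.

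The unit-additivity statement is then immediate from the machinery already in place. Applying Corollary~\ref{cor:udimsubvars} to the $D$-subalgebra $S = D[\mathbf{X}]$ of itself gives $\udim S = \udim D$, and since $D$ is a finitely generated domain over a field, Theorem~\ref{thm:uadimbound} yields $\udim D \leq \dim D$. Combining these with the dimension inequality gives
\[
\dim S \geq \dim D + |\Lambda| \geq \udim D + |\Lambda| = \udim S + |\Lambda|,
\]
as desired. I do not expect any serious obstacle here: the entire unit-additivity content is carried by the two earlier results, and the only care required is in the bookkeeping for an infinite index set $\Lambda$ and in correctly citing the standard polynomial-ring dimension facts, neither of which presents real difficulty.
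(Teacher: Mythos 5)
Your proposal is correct and follows exactly the route the paper intends: the paper states this corollary without proof, as a consequence of ``standard facts about polynomial algebras'' (your explicit prime chain and the Noetherian case of $\dim R[X]=\dim R+1$) combined with Corollary~\ref{cor:udimsubvars} and Theorem~\ref{thm:uadimbound}, which is precisely your chain of inequalities $\dim S \geq \dim D + |\Lambda| \geq \udim D + |\Lambda| = \udim S + |\Lambda|$. Your handling of the infinite-$\Lambda$ and infinite-dimensional cases is careful bookkeeping that the paper leaves implicit.
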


\section{Rings with \uad\ 1}\label{sec:uad1}
Our next result is the setup to provide some broad classes of rings with \uad\ 1.

\begin{prop}\label{pr:Jacudim}
    Let $R$ be a ring whose nilradical $\cN(R)$ is prime.  Suppose $\Jac(R) \neq \cN(R)$.  Then $\udim R=1$.
\end{prop}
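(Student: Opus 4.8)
The plan is to work directly from the definition of \uad: since $\udim R = 0$ exactly when $R$ is \ua, I must establish the two bounds $\udim R \geq 1$ and $\udim R \leq 1$. The lower bound is immediate from Example~\ref{ex:first}: any ring with $\Jac(R) \neq \cN(R)$ fails to be \ua\ (if $j \in \Jac(R) \setminus \cN(R)$, then $1+j$ and $-1$ are units but $j = (1+j)+(-1)$ is a non-nilpotent non-unit). Hence $\udim R \neq 0$, so $\udim R \geq 1$. Everything then rests on showing $W_1^{-1}R$ is \ua, where $W_1$ denotes the non-nilpotent finite sums of units.

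The heart of the argument is to compute the saturation $V_1 = \widetilde{W_1}$, and my claim is that it is as large as possible, namely $V_1 = R \setminus \cN(R)$. The inclusion $V_1 \subseteq R \setminus \cN(R)$ is automatic, since every element of $W_1$ is non-nilpotent and $\cN(R)$ is prime, so no multiple of a nilpotent element can lie in $W_1$. For the reverse inclusion I would fix once and for all an element $j \in \Jac(R) \setminus \cN(R)$ and, given an arbitrary $r \in R \setminus \cN(R)$, observe that $rj \in \Jac(R)$, so that $1 + rj$ is a unit and $rj = (1+rj) + (-1)$ is a sum of two units. Moreover $rj \notin \cN(R)$ because $\cN(R)$ is prime and neither $r$ nor $j$ lies in it; thus $rj \in W_1$, which exhibits $r$ as a divisor of an element of $W_1$, i.e. $r \in \widetilde{W_1} = V_1$. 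This proves $R \setminus \cN(R) \subseteq V_1$, hence equality.

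Once $V_1 = R \setminus \cN(R)$ is known, the proof finishes quickly. The set $V_1$ is saturated and contains no nilpotents, and it is trivially closed under non-nilpotent sums (a non-nilpotent sum of elements of $R \setminus \cN(R)$ is again a non-nilpotent element, hence in $V_1$). Proposition~\ref{pr:satsumua} then gives that $V_1^{-1}R$ is \ua; since $W_1^{-1}R \cong V_1^{-1}R$ (localization is unchanged under saturation), we conclude $W_1^{-1}R$ is \ua\ and therefore $\udim R \leq 1$. Combined with the lower bound this yields $\udim R = 1$. (Alternatively, one may note $V_1^{-1}R = R_{\cN(R)}$ is local with maximal ideal equal to its nilradical, so every non-unit is nilpotent and Proposition~\ref{pr:ua} applies directly.) The step I expect to be the genuine obstacle is the reverse inclusion $R \setminus \cN(R) \subseteq V_1$: one must manufacture, for each non-nilpotent $r$, a companion whose product with $r$ is a sum of units. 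The device of multiplying by the fixed Jacobson-radical element $j$ is exactly what does this, and it is precisely here that both hypotheses enter — $\Jac(R) \neq \cN(R)$ supplies $j$, and the primeness of $\cN(R)$ guarantees $rj$ stays non-nilpotent.
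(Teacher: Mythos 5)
Your proposal is correct and follows essentially the same route as the paper's proof: both get the lower bound from Example~\ref{ex:first}, and both prove the upper bound via the same device of fixing $j \in \Jac(R)\setminus\cN(R)$ and noting that $jr = (1+jr)+(-1)$ is a non-nilpotent sum of units for any non-nilpotent $r$, so that the saturation of $W_1$ is all of $R\setminus\cN(R)$ and $W_1^{-1}R = R_{\cN(R)}$ is \ua. The only cosmetic difference is that you close via Proposition~\ref{pr:satsumua} while the paper directly observes that $R_{\cN(R)}$ is a zero-dimensional local ring, an alternative you also note.
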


\begin{proof}
By Example~\ref{ex:Jacnil}, $\udim R >0$.  On the other hand, let $W$ be the multiplicative set of sums of units that are not nilpotent (see Lemma~\ref{lem:mcsums}).  Take any $j \in \Jac(R) \setminus \cN(R)$ and $f\in R\setminus \cN(R)$; then $1+jf$ is a unit, so $jf= (-1)+(1+jf) \in W$ since $j \notin \cN(R)$ and $f \notin \cN(R)$.  Thus, $f$ is in the saturation of $W$, so $\frac f1$ is a unit in $W^{-1}R$.  It follows that $W^{-1}R = R_{\cN(R)}$ is a zero-dimensional local ring, so $\udim R \leq 1$. Thus, $\udim R=1$.
\end{proof}

\begin{cor}\label{cor:Jacudim}
Any ring $R$ of the following types has $\udim R=1$, provided that it has prime nilradical (e.g. if it is a domain). \begin{enumerate}
    \item\label{it:semiloc} $R$ is a semilocal ring such that all maximal ideals have positive height (e.g. a local ring of positive Krull dimension).
    \item\label{it:Gdom} $R$ is a G-domain that is not a field.
    \item\label{it:ps} $R = A[\![X]\!]$, where $A$ is a nonzero ring and $X$ an analytic indeterminate over $A$.
    \item\label{it:locpull} $R$ is a subring of $T$ and contains $\m$, where $(T,\m)$ is a local ring of positive Krull dimension.
\end{enumerate}
\end{cor}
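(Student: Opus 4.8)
The plan is to deduce all four cases from Proposition~\ref{pr:Jacudim}. Since each $R$ is assumed to have prime nilradical, that proposition applies as soon as we know $\Jac(R) \neq \cN(R)$. Because $\cN(R) \subseteq \Jac(R)$ always holds (the nilradical lies in every maximal ideal), the task in each case reduces to exhibiting a single non-nilpotent element of $\Jac(R)$; Proposition~\ref{pr:Jacudim} then delivers $\udim R = 1$ with no further work.

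For cases (\ref{it:Gdom}) and (\ref{it:ps}) such an element is immediate. In the G-domain case I would invoke the standard characterization that a G-domain carries a nonzero element $f$ lying in every nonzero prime; since $R$ is not a field, $0$ is not maximal, so every maximal ideal is a nonzero prime and hence contains $f$, giving $f \in \Jac(R) \setminus \{0\} = \Jac(R) \setminus \cN(R)$. For $R = A[\![X]\!]$ I would take the element $X$: it is non-nilpotent because $A \neq 0$, and for every $r \in R$ the series $1 - rX$ has unit constant term and is therefore a unit, so $X \in \Jac(R) \setminus \cN(R)$ --- this is precisely the computation already flagged in Example~\ref{ex:first}.

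Case (\ref{it:locpull}) needs a short verification that inverses stay inside $R$. Using $\dim T > 0$, I would pick a prime $\q \subsetneq \m$ of $T$ and an $x \in \m \setminus \q$, so that $x \notin \cN(T)$ and hence $x$ is non-nilpotent in $T$, and thus in $R$. The point is then to show $\m \subseteq \Jac(R)$: for $m \in \m$ and $r \in R$ we have $rm \in \m = \Jac(T)$, so $s := (1-rm)^{-1}$ exists in $T$, and the identity $s = 1 + s\,rm$ together with $s\,rm \in \m \subseteq R$ forces $s \in R$. Hence $1 - rm$ is a unit of $R$, giving $m \in \Jac(R)$ and in particular $x \in \Jac(R) \setminus \cN(R)$. (Taking $T = R$ here also disposes of the local subcase of (\ref{it:semiloc}).)

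I expect the genuinely semilocal part of (\ref{it:semiloc}) to be the main obstacle, and it is where primeness of $\cN(R)$ is indispensable. Writing the maximal ideals as $\m_1, \ldots, \m_r$ so that $\Jac(R) = \m_1 \cap \cdots \cap \m_r$, I would first note that primeness makes $\cN(R)$ the unique minimal prime, and that each $\m_i$, having positive height, properly contains a prime and therefore properly contains $\cN(R)$. Then, arguing by contradiction, if $\Jac(R) \subseteq \cN(R)$ we would get $\m_1 \cdots \m_r \subseteq \m_1 \cap \cdots \cap \m_r \subseteq \cN(R)$, so the prime $\cN(R)$ would contain some factor $\m_i$, contradicting $\cN(R) \subsetneq \m_i$. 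The subtle point is that finiteness of the set of maximal ideals (semilocality) is exactly what lets me replace the intersection by the product $\m_1 \cdots \m_r$ and invoke the prime-divides-a-product principle; without primeness of $\cN(R)$ this step, and indeed the whole conclusion, can break down.
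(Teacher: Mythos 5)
Your proposal is correct and follows essentially the same route as the paper: reduce everything to Proposition~\ref{pr:Jacudim} by exhibiting $\Jac(R) \neq \cN(R)$, with the same arguments for the G-domain, power series, and local-subring cases (including the identity $s = 1 + s\,rm \in 1+\m \subseteq R$). Your semilocal case is phrased contrapositively via the containment $\m_1 \cdots \m_r \subseteq \Jac(R)$ and the prime-contains-a-factor principle, whereas the paper directly constructs the element $j = \prod_i x_i$ with $x_i \in \m_i \setminus \cN(R)$, but these are the same product-plus-primeness argument in two guises.
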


\begin{proof}
By Proposition~\ref{pr:Jacudim}, it is enough to show that each of the above has $\Jac(R) \neq \cN(R)$.

Case (\ref{it:semiloc}): 
Let $\m_1, \ldots, \m_s$ be the maximal ideals of $R$.  For each $1\leq i \leq s$, let $x_i \in \m_i \setminus \cN(R)$.  Set $t := \prod_{i=1}^s x_i$.  Then $t \in \Jac(R) \setminus \cN(R)$.

Case (\ref{it:Gdom}):  Recall from \cite[Section 1-3]{Kap-CR} that this means $R$ is an integral domain that admits a nonzero element $u$ that is in every nonzero prime ideal.  In particular, $u$ is in every maximal ideal, but avoids the nilradical of the ring (which is $(0)$), and is thus the desired element of $\Jac(R)$.

Case (\ref{it:ps}): We have $X \in \Jac(R) \setminus \cN(R)$, since for any $g\in R$, $1+Xg \in U(R)$.

Case (\ref{it:locpull}): Let $\pi: T \onto T/\m =:k$ be the canonical map, and let $D = \pi(R)$.  Note that $R =\pi^{-1}(D)$, since $\pi$ is surjective and its kernel is inside $R$.  But as $\m$ is an ideal of $T$ contained in $R$, we then also see that $\m$ is an ideal of $R$.

Now let $t \in \m$ and $f\in R$.  Then as $\m = \Jac(T)$, $1+tf$ is a unit of $T$. Say $(1+tf)g =1$, $g\in T$.  Then $g = 1-tfg \in 1+\m \subseteq R$, so that $1+tf$ is a unit of $R$.  Hence, $t \in \Jac(R)$.  Thus, $\m \subseteq \Jac(R)$.

Finally, since $\dim T>0$, $\m$ contains non-nilpotent elements.  Thus, $\Jac(R) \supseteq \m \supsetneq \cN(R)$.
\end{proof}

Next we see, in a family of examples of interest in algebraic number theory, that not every domain with \uad\ 1 satisfies the conditions of Proposition~\ref{pr:Jacudim}. 

\begin{example}\label{ex:algint}
Let $A$ be an integral domain that is generated by its units (i.e., every element is a sum of units).\footnote{Note that $\Z$ is such a ring, as are the rings of integers of $\Q(i), \Q(\sqrt{-3}),$ and $\Q(\sqrt{3})$, but not of $\Q(\sqrt{-5})$ or $\Q(\sqrt{5})$; see \cite[Theorems 7 and 8]{AsVam-usum}.}   Let $K$ be its fraction field, $L$ an algebraic closure of $K$, and $R$ a ring between $A$ and $L$.  Then $\udim R\leq 1$.  To see this, let $0\neq r \in R$.  Then $r$ is algebraic over $K$.  By \cite[Theorem 15]{Kap-CR},
$r^{-1} \in K[r]$.  But $W_1$ (from Definition~\ref{def:udim}, with respect to $R$) contains all the nonzero elements of $A$.  Hence, $K \subseteq W_1^{-1}R,$
 whence $r^{-1} \in K[r] \subseteq W_1^{-1}R$.  Thus, $W_1^{-1}R$ is a field.

    In particular, let $R$ be a subring of the ring of all algebraic integers.  Then $R$ is not \ua, since $2$ is a nonzero nonunit sum of units in $R$. Thus $\udim R \geq 1$.  But $R$ is an integral extension of $\Z$, so by the above we have $\udim R \leq 1$ (so $\udim R=1$).  Then since $\Z$ is a Jacobson ring, its integral extension $R$ must also be a Jacobson ring.  Therefore, $R$ cannot satisfy the conditions of Proposition~\ref{pr:Jacudim}. 
\end{example}

\section{Unit-additive closure and localizations}\label{sec:uac}

Let $R$ be a ring such that $\cN(R)$ is a prime ideal. We show that there exists a localization of $R$  which has a universal mapping property with respect to maps from $R$ to unit-additive rings.  In case $R$ is an integral domain this ring has an alternative construction as the unique minimal unit-additive overring of $R$.

\begin{thm}\label{thm:ualoc}
Let $R$ be a ring whose nilradical $\cN(R)$ is a prime ideal.  Then there is a multiplicative set $W \subseteq R \setminus \cN(R)$ such that $W^{-1}R$ is \ua, and such that for any ring homomorphism $\phi: R \ra S$, where $S$ is \ua\ and $\phi^{-1}(\cN(S)) \subseteq \cN(R)$, there is a unique ring homomorphism $\tilde \phi: W^{-1}R \ra S$ such that $\tilde \phi \circ \ell = \phi$, where $\ell: R \ra W^{-1}R$ is the localization map.
\end{thm}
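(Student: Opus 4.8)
The plan is to build $W$ as the ascending union of the nested multiplicative sets $W_i$ from Definition~\ref{def:udim} and to recognize $W^{-1}R$ as the sought-after unit-additive closure. First I would verify that the two chains are increasing. Since every unit is a one-term non-nilpotent sum of units, $V_0 = U(R) \subseteq W_1$, and the inclusion $W_i \subseteq \widetilde{W_i} = V_i$ shows each $V_i$ absorbs its predecessor; an easy induction then gives $W_0 \subseteq W_1 \subseteq \cdots$ and $V_0 \subseteq V_1 \subseteq \cdots$ together with $V_i \subseteq W_{i+1}$ (each element of $V_i$ is a single non-nilpotent summand from $V_i$). Setting $W := \bigcup_{i\geq 0} W_i = \bigcup_{i\geq 0} V_i$, I obtain a multiplicative set, being an ascending union of multiplicative sets (Lemma~\ref{lem:mcsums}), and $W \subseteq R \setminus \cN(R)$ because each $W_i$ consists of non-nilpotent elements while each $V_i = \widetilde{W_i}$ cannot meet the nilradical (a nilpotent element has no multiple in $W_i$).

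Next I would check that $W$ satisfies the hypotheses of Proposition~\ref{pr:satsumua}. Saturation is immediate: if $xy \in W$ then $xy \in W_i$ for some $i$, so $x \in \widetilde{W_i} = V_i \subseteq W$. Closure under non-nilpotent sums is the crux of showing $W^{-1}R$ is \ua: given $a,b \in W$ with $a+b$ non-nilpotent, choose $i$ with $a,b \in W_i \subseteq V_i$; then $a+b$ is a non-nilpotent sum of elements of $V_i$, hence lies in $W_{i+1} \subseteq W$. With $\cN(R)$ prime, $W$ saturated, and $W$ avoiding nilpotents, Proposition~\ref{pr:satsumua} then yields that $W^{-1}R$ is \ua.

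For the universal property, let $\phi: R \ra S$ be a ring map with $S$ unit-additive and $\phi^{-1}(\cN(S)) \subseteq \cN(R)$. By the universal property of localization it suffices to prove $\phi(W) \subseteq U(S)$, and for this I would show by induction on $i$ that $\phi(W_i) \subseteq U(S)$ and $\phi(V_i) \subseteq U(S)$. The base case holds since ring maps preserve units. For the inductive step, an element of $W_i$ is a non-nilpotent sum $w = \sum_j v_j$ with $v_j \in V_{i-1}$, so $\phi(w) = \sum_j \phi(v_j)$ is a sum of units of $S$; as $S$ is \ua, $\phi(w)$ is either a unit or nilpotent. This is exactly where the hypothesis on $\phi$ enters: if $\phi(w)$ were nilpotent then $w \in \phi^{-1}(\cN(S)) \subseteq \cN(R)$, contradicting $w \notin \cN(R)$; hence $\phi(w) \in U(S)$. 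For $v \in V_i = \widetilde{W_i}$, pick $y$ with $vy \in W_i$; then $\phi(v)\phi(y) = \phi(vy)$ is a unit, forcing $\phi(v) \in U(S)$. Passing to the union gives $\phi(W) \subseteq U(S)$, so $\phi$ factors uniquely through $\ell$, producing the desired unique $\tilde\phi$.

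I expect the main obstacle to be the bookkeeping that organizes the inductive construction into a single clean multiplicative set — in particular, verifying the inclusions $V_{i-1} \subseteq V_i \subseteq W_{i+1}$ that make $W = \bigcup_i W_i$ simultaneously saturated and closed under non-nilpotent sums. Once that structural fact is in place, the universal property is a direct induction whose only subtlety is the careful use of $\phi^{-1}(\cN(S)) \subseteq \cN(R)$ to exclude the nilpotent alternative that unit-additivity of $S$ otherwise permits.
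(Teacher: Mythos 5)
Your proposal is correct and follows essentially the same route as the paper: take $W=\bigcup_i W_i=\bigcup_i V_i$, verify it is a saturated multiplicative set avoiding $\cN(R)$ and closed under non-nilpotent sums so that Proposition~\ref{pr:satsumua} gives unit-additivity of $W^{-1}R$, then prove the universal property by induction along the filtration, using $\phi^{-1}(\cN(S))\subseteq\cN(R)$ to rule out the nilpotent alternative. Your variant of the sum-closure step (pushing $a+b$ into $W_{i+1}$ via $V_i$ rather than invoking closure of $W_k$ itself) is a harmless, if anything slightly cleaner, rephrasing of the paper's argument.
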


\begin{proof}
Let $W_i$, $V_i$, for $i\in \N_0$ be as in Definition~\ref{def:udim}.  Set $W := \bigcup_{n \in \N_0} W_n$.  Since $W_0 \subseteq V_0 \subseteq W_1 \subseteq V_1 \subseteq \cdots$, we also have $W = \bigcup_{n \in \N_0} V_n$.

\vspace{.1in}
\noindent \textbf{Claim:} $W$ is a saturated multiplicative set that is closed under non-nilpotent sums.

\begin{proof}[Proof of claim]
To see that $W$ is multiplicative, let $v,w \in W$.  Then there exist $i,j$ such that $v\in W_i$, $w \in W_j$.  Let $n= \max\{i,j\}$.  Then $v,w \in W_n$, so $vw \in W_n$ (since $W_n$ is multiplicative), but $W_n \subseteq W$, so $vw \in W$.

To see that $W$ is saturated, let $a,b \in R$ such that $ab \in W$.  Then there is some $n$ such that $ab \in V_n$.  Since $V_n$ is saturated, we have $a,b \in V_n \subseteq W$.

To see that $W$ is closed under non-nilpotent sums, let $w_1, \ldots, w_n \in W$ such that $w := w_1 + \cdots + w_n$ is non-nilpotent.  For each $1\leq i \leq n$, there exists $k_i \in \N_0$ such that $w_i \in W_{k_i}$.  Let $k=\max\{k_i \mid 1\leq i \leq n\}$.  Then $w_1, \ldots, w_n \in W_k$.  But $W_k$ is closed under non-nilpotent sums, so that since $w$ is non-nilpotent, $w \in W_k \subseteq W$, finishing the proof of the Claim.
\end{proof}

Then by Proposition~\ref{pr:satsumua}, $W^{-1}R$ is \ua.

Let $\phi: R \ra S$ be a ring homomorphism and suppose $S$ is \ua\ and $\phi^{-1}(\cN(S)) \subseteq \cN(R)$. Let $w\in W$.  Then there is some $n$ such that $w\in W_n$.  We want to show by induction on $n$ that $\phi(w)$ is a unit of $S$.

To see this, first take the case $n=0$.  Then $w=w_1 + \cdots + w_k$, where each $w_i$ is a unit of $R$.  But units map to units in ring homomorphisms, so $\phi(w) = \phi(w_1) + \cdots + \phi(w_k)$ is a unit or nilpotent.  But since $\phi^{-1}(\cN(S)) \subseteq \cN(R)$ and $w$ is not nilpotent, it follows that $\phi(w)$ is not nilpotent. Hence it is a unit.

Now let $n>0$ and assume that for any $x\in W_{n-1}$, $\phi(x)$ is a unit.  We have $w = v_1 + \cdots + v_k$, where each $v_i \in V_{n-1}$.  But since $V_{n-1}$ is the saturation of $W_{n-1}$, there exist $c_i \in R$ such that $c_i v_i \in W_{n-1}$.  Then by induction $\phi(c_i v_i) = \phi(c_i)\phi(v_i)$ is a unit, whence $\phi(v_i)$ is a unit.  Also, since $w$ is non-nilpotent, $\phi(w)$ is non-nilpotent, so $\phi(w) = \sum_{i=1}^k \phi(v_i)$ is a non-nilpotent sum of units in $S$, hence is a unit.  This completes the induction and shows that $\phi$ maps all elements of $W$ to units of $S$.  The rest follows from standard localization theory.
\end{proof}

Note that if $R$ is an integral domain, the condition ``$\phi^{-1}(\cN(S)) \subseteq \cN(R)$'' reduces to the condition that the ring homomorphism is injective.

\begin{defn}
Let $S$ be a ring and $R$ a subring.  The \emph{\ua\ closure} of $R$ in $S$, denoted $R^{\uasym}_S$, is the intersection of all the \ua\ subrings of $S$ that contain $R$.
\end{defn}

\begin{lemma}\label{lem:isec}
Let $S$ be a commutative ring.  The intersection of any nonempty collection of \ua\ subrings of $S$ is \ua.
\end{lemma}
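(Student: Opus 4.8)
The plan is to fix a nonempty collection $\{R_\lambda\}_{\lambda \in \Lambda}$ of \ua\ subrings of $S$, set $R := \bigcap_{\lambda} R_\lambda$, and verify Definition~\ref{def:ua} directly for $R$. First I would note that $R$ is a subring of $S$, since an intersection of subrings is always a subring (and the collection is nonempty, so $R$ contains $1$). The key preliminary observation is that nilpotency is an \emph{absolute} notion: for $x \in R$, we have $x \in \cN(R) \iff x \in \cN(S)$, because $x^n = 0$ holds in $R$ if and only if it holds in $S$. Units, by contrast, are not absolute, and this is where the only real subtlety lies.

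Next I would take arbitrary $u, v \in U(R)$ and show $u + v \in U(R) \cup \cN(R)$. Since $u$ is a unit of $R$, its inverse $u^{-1}$ lies in $R \subseteq R_\lambda$ for every $\lambda$, so $u \in U(R_\lambda)$; likewise $v \in U(R_\lambda)$. Because each $R_\lambda$ is \ua, for every $\lambda$ the element $u + v$ is either a unit of $R_\lambda$ or nilpotent.

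I would then split into two cases. If $u + v$ is nilpotent in some single $R_\lambda$, then by the absoluteness of nilpotency it is nilpotent in $S$, hence in $R$, and we are done. Otherwise $u + v$ is a unit of $R_\lambda$ for \emph{every} $\lambda$. For each $\lambda$ let $w_\lambda \in R_\lambda$ satisfy $(u+v)w_\lambda = 1$. Here I would invoke uniqueness of inverses in $S$: since each $w_\lambda$ is an inverse of $u+v$ inside the common overring $S$, all the $w_\lambda$ coincide with a single element $w \in S$. Then $w \in R_\lambda$ for all $\lambda$, so $w \in \bigcap_\lambda R_\lambda = R$, whence $u + v$ is a unit of $R$. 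In either case $u + v \in U(R) \cup \cN(R)$, so $R$ is \ua.

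The step I expect to be the main (indeed the only) obstacle is the unit case: one must be careful that ``unit of $R$'' demands the inverse lie in $R$ itself, not merely in each $R_\lambda$. The resolution is precisely the uniqueness of the inverse in the ambient ring $S$, which forces the various inverses to agree and therefore to lie in the intersection. Every other step is routine and follows immediately from the definitions.
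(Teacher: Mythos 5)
Your proof is correct and takes essentially the same approach as the paper's: units of $R$ are units of every $R_\lambda$, unit-additivity of each $R_\lambda$ forces $u+v$ to be a unit or nilpotent there, and uniqueness of inverses in $S$ puts the common inverse in the intersection. The only difference is cosmetic --- you make explicit the absoluteness of nilpotency and the agreement of inverses, which the paper leaves implicit.
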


\begin{proof}
Let $\{R_\alpha\}_{\alpha \in \Lambda}$ be such a collection, and set $R := \bigcap_\alpha R_\alpha$.  Let $u,v \in U(R)$, and suppose $u+v$ is not nilpotent.  Then $u^{-1}, v^{-1} \in R$, so for all $\alpha \in \Lambda$, we have $u,v,u^{-1}, v^{-1} \in R_\alpha$.  So for each $\alpha$, $u,v$ are units of $R_\alpha$, so $u+v$, as it is not nilpotent, must be a unit of $R_\alpha$.  That is, $(u+v)^{-1} \in R_\alpha$ for all $\alpha$, whence $(u+v)^{-1} \in \bigcap_\alpha R_\alpha = R$, so $u+v$ is a unit of $R$.
\end{proof}

A quick thing to conclude is that if $L/K$ is any field extension and $R$ is a subring of $K$, then $R^{\uasym}_K = R^\uasym_L$, since $K$ is also a \ua\ subring of $L$.  Hence we may unambiguously write $R^\uasym$ when our focus is on the ring rather than the field containing it.

More significantly, the above lemma shows that when $S$ is itself \ua,  the operation $(-)^{\uasym}_S$ really is a \emph{closure operation} on the poset of subrings of $S$, ordered by inclusion (see \cite[Definition 0.26(7)]{El-clbook}), and $R^{\uasym}_S$ is always \ua\ for any subring $R$ of $S$.  But this operation is hard to compute from the definition, and in fact for integral domains, we have the tools to compute it above.

\begin{prop}\label{pr:uacl}
Let $R$ be an integral domain, and let $W \subseteq R$ be as constructed in Theorem~\ref{thm:ualoc}.  Then $R^\uasym = W^{-1}R$.
\end{prop}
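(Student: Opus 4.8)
The plan is to prove the two inclusions $R^{\uasym} \subseteq W^{-1}R$ and $W^{-1}R \subseteq R^{\uasym}$ separately, working throughout inside the fraction field $K = \Frac(R)$. As noted after Lemma~\ref{lem:isec}, since $K$ is a \ua\ subring of any field extension, we may compute $R^{\uasym} = R^{\uasym}_K$ unambiguously; thus every \ua\ overring in sight may be taken to be a subring of $K$, and the closure is the intersection of all \ua\ subrings of $K$ containing $R$.

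For the first inclusion, I would observe that $W^{-1}R$ is itself one of the rings whose intersection defines $R^{\uasym}_K$. Indeed, since $R$ is a domain and $W \subseteq R \setminus \{0\}$ is multiplicative, the localization $W^{-1}R = \{a/w \mid a \in R,\ w \in W\}$ is a subring of $K$ containing $R$, and Theorem~\ref{thm:ualoc} (via Proposition~\ref{pr:satsumua}) tells us it is \ua. Being a member of the collection being intersected, it contains the intersection, so $R^{\uasym} = R^{\uasym}_K \subseteq W^{-1}R$.

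For the reverse inclusion, let $T$ be an arbitrary \ua\ subring of $K$ with $R \subseteq T$, and consider the inclusion map $\iota: R \into T$. Since $R$ and $T$ are domains and $\iota$ is injective, $\iota^{-1}(\cN(T)) = \iota^{-1}((0)) = (0) = \cN(R)$, so the hypothesis of Theorem~\ref{thm:ualoc} is met. The heart of the argument is then to invoke the induction carried out in the proof of that theorem, which shows precisely that such a map sends every element of $W$ to a unit of $T$. Consequently $w^{-1} \in T$ for each $w \in W$, and since $R \subseteq T$ this gives $W^{-1}R \subseteq T$. As $T$ ranges over all \ua\ subrings of $K$ containing $R$, intersecting yields $W^{-1}R \subseteq R^{\uasym}$, and combining the two inclusions finishes the proof.

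I do not expect a serious obstacle, as the substantive work was already done in establishing Theorem~\ref{thm:ualoc}. The one point requiring care is that the reverse inclusion should \emph{not} be attempted by citing the universal mapping property as a black box, which only produces a ring homomorphism $W^{-1}R \ra T$ and not an inclusion of subrings of $K$. Instead one must extract from the proof of Theorem~\ref{thm:ualoc} the sharper statement that the inclusion $\iota$ carries $W$ into $U(T)$; it is exactly this that forces $W^{-1}R$ to sit literally inside every \ua\ overring $T$ of $R$ within $K$.
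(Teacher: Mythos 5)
Your proof is correct, and its first half coincides with the paper's: $W^{-1}R$ is a \ua\ subring of $K=\Frac R$ containing $R$ by Theorem~\ref{thm:ualoc}, hence it contains the intersection defining $R^{\uasym}$. For the reverse inclusion your route genuinely differs. You take each \ua\ subring $T$ of $K$ containing $R$ separately, verify that the inclusion $R \into T$ satisfies the nilradical hypothesis, and then invoke the induction carried out \emph{inside} the proof of Theorem~\ref{thm:ualoc} --- that such a map carries $W$ into $U(T)$ --- to get $w^{-1} \in T$ for every $w \in W$, hence $W^{-1}R \subseteq T$, and finally intersect over all $T$. The paper instead applies the \emph{statement} of Theorem~\ref{thm:ualoc} (the universal mapping property) with $S = R^{\uasym}$ itself, which first requires Lemma~\ref{lem:isec} to know that $R^{\uasym}$ is \ua: this produces $\tilde i\colon W^{-1}R \ra R^{\uasym}$ extending the inclusion $R \into R^{\uasym}$, and then the computation $w\,\tilde i(1/w) = \tilde i(1) = 1$, together with uniqueness of inverses in $K$, forces $\tilde i(1/w) = w^{-1} \in R^{\uasym}$, whence $W^{-1}R = R[W^{-1}] \subseteq R^{\uasym}$. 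Your approach buys independence from Lemma~\ref{lem:isec} and sidesteps any worry about whether the universal map is an inclusion, at the cost of citing the internals of a proof rather than a stated theorem. Note, however, that your closing caution --- that the universal property used as a black box ``only produces a ring homomorphism'' and so cannot give the inclusion --- is not quite right: as the computation above shows, the fact that $\tilde i$ fixes $R$ pins down $\tilde i(1/w)$ to be the honest inverse $w^{-1}$ inside $K$, so the black box does suffice, and this is exactly how the paper argues.
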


\begin{proof}
By Theorem~\ref{thm:ualoc}, $W^{-1}R$ is a \ua\ subring of $K=\Frac R$ that contains $R$.  Since $R^\uasym$ is the intersection of all such subrings of $K$, we have $W^{-1}R \supseteq R^\uasym$.

On the other hand, since $R^\uasym$ is \ua\ by Lemma~\ref{lem:isec}, again by Theorem~\ref{thm:ualoc} there is a unique ring homomorphism $\tilde i: W^{-1}R \ra R^\uasym$ that extends the inclusion map $i: R \ra R^\uasym$.  For any $w\in W$, we have $1=\tilde i(1) = \tilde i(w/w) = \tilde i(w \cdot \frac 1w) = \tilde i(w) \tilde i(1/w) = w \tilde i(1/w)$. Thus, $\tilde i(1/w) = w^{-1} \in R^\uasym$.  Thus, $R^\uasym \supseteq R[W^{-1}] = W^{-1}R$.
\end{proof}

We end with a connection of unit-additive closure to the inequality in Theorem~\ref{thm:uadimbound}.
\begin{prop}
Let $R$ be a domain that is finitely generated over a field.  If $\udim R = \dim R$, then $R^\uasym = $ the fraction field of $R$.
\end{prop}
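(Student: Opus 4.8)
The plan is to prove the statement by induction on $d := \dim R = \udim R$, peeling off one localization at a time. Throughout write $K = \Frac R$, and let $B := W_1^{-1}R$ be the localization of $R$ at the multiplicative set $W_1 = W_1(R)$ of nonzero finite sums of units, as in Definition~\ref{def:udim}; note that $B$ is again a domain with $\Frac B = K$. The base case $d = 0$ is immediate, since a zero-dimensional domain is a field and hence equals both its own \ua\ closure and its own fraction field.

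The engine of the induction is a compatibility lemma: for every $i \ge 0$ one has, as subrings of $K$,
\[
W_i(B)^{-1}B = W_{i+1}(R)^{-1}R.
\]
I would prove this by tracking the multiplicative sets of Definition~\ref{def:udim} through the localization. Since $W_1(R)$ has saturation $V_1(R)$, a fraction $a/s$ with $s \in W_1(R)$ is a unit of $B$ exactly when $a \in V_1(R)$, so $V_0(B) = U(B) = \{v/w : v \in V_1(R),\ w \in W_1(R)\}$. Clearing denominators in a nonzero sum of such fractions and using that $V_1(R)$ is multiplicatively closed shows $W_1(B) = \{n/w : n \in W_2(R),\ w \in W_1(R)\}$; an induction of the same flavour then yields $W_i(B) = \{n/w : n \in W_{i+1}(R),\ w \in W_1(R)\}$ and $V_i(B) = \{v/w : v \in V_{i+1}(R),\ w \in W_1(R)\}$ for all $i \ge 1$. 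The case $i = 0$ of the displayed equality is just the definition $B = W_1(R)^{-1}R$, and for $i \ge 1$, since $W_1(R) \subseteq W_{i+1}(R)$, localizing $B$ further at $W_i(B)$ inverts exactly $W_{i+1}(R)$, giving the displayed equality.

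Two consequences drive the induction. First, taking the increasing union over $i$ of the displayed equality gives
\[
(W_1^{-1}R)^\uasym = \bigcup_{i} W_i(B)^{-1}B = \bigcup_{i} W_{i+1}(R)^{-1}R = R^\uasym,
\]
using the description of $R^\uasym$ as $W^{-1}R$ with $W = \bigcup_i W_i(R)$ (Theorem~\ref{thm:ualoc}) and $W_0(R) = \{1\}$. Second, putting $i = \udim B$ in the displayed equality shows that $W_{\udim B + 1}(R)^{-1}R$ is \ua, whence $\udim R \le \udim B + 1$. Now assume $d > 0$, so that $R$ is not \ua. By the proof of Theorem~\ref{thm:uadimbound}, $B$ is a domain finitely generated over a field with $\dim B \le d - 1$, so Theorem~\ref{thm:uadimbound} gives $\udim B \le \dim B \le d - 1$, while the inequality above gives $\udim B \ge \udim R - 1 = d - 1$. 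Hence $\udim B = \dim B = d - 1$, and the inductive hypothesis applies to $B$ to give $B^\uasym = \Frac B = K$. Combining with $R^\uasym = B^\uasym$ completes the inductive step, and with it the proof.

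The only substantive work is the compatibility lemma; the surrounding squeeze is formal. I expect the bookkeeping there to be the main obstacle, specifically verifying that the two operations defining the $W_i, V_i$ — passing to nonzero sums and passing to saturations — commute cleanly with localization at $W_1$, so that the inductive identities for $W_i(B)$ and $V_i(B)$ really hold for every $i$. Once these are in hand, the dimension squeeze forces a drop of exactly one in both $\dim$ and $\udim$ at each stage, so that after $d$ localizations one reaches a zero-dimensional domain, namely the fraction field.
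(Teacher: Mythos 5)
Your proof is correct, but it is organized quite differently from the paper's, so a comparison is worth giving. The paper's proof is a direct telescoping count rather than an induction: since $\udim R = d$, the chain of Definition~\ref{def:udim} stabilizes at stage $d$, so $R^\uasym = W_d^{-1}R$; the proof of Theorem~\ref{thm:uadimbound} gives $\dim W_i^{-1}R \leq \dim W_{i-1}^{-1}R - 1$ for each $1 \leq i \leq d$, whence $0 \leq \dim R^\uasym \leq \dim R - d = 0$; a zero-dimensional domain is a field, and a localization of $R$ that is a field must be $\Frac R$. Your induction replaces this telescoping with the squeeze $\udim B = \dim B = d-1$ for $B = W_1^{-1}R$, together with the identity $R^\uasym = B^\uasym$. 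The substantive difference is your compatibility lemma $W_i(B)^{-1}B = W_{i+1}(R)^{-1}R$: the paper never states such a result, yet it is implicitly needed both in its proof of this proposition (when it invokes the proof of Theorem~\ref{thm:uadimbound} to get the per-stage dimension drop along the rings $W_i^{-1}R$) and in the inductive step of Theorem~\ref{thm:uadimbound} itself, where $\udim R \leq \udim(W_1^{-1}R) + 1$ is used without comment. So your version is longer but supplies a justification the paper elides, and your verification of the lemma (units of $B$ are exactly $v/w$ with $v \in V_1(R)$, $w \in W_1(R)$, then clearing denominators and inducting) is sound; the paper's version buys brevity --- no induction, no squeeze on $\udim$, and an immediate endgame --- at the cost of leaving the compatibility of the $W_i$-chains with localization implicit.
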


\begin{proof}
Let $d = \dim R = \udim R$.  By the proof of Theorem~\ref{thm:uadimbound}, for each $1 \leq i \leq d$ we have $\dim W_i^{-1}R < \dim W_{i-1}^{-1}R$, so that since Krull dimension is an integer, we have $\dim W_i^{-1}R \leq \dim W_{i-1}^{-1}R -1$.  Thus, $0 \leq \dim R^\uasym = \dim W_d^{-1}R \leq \dim W_0^{-1}R - d = \dim R - d = 0$.  That is, $R^\uasym$ is a zero-dimensional integral domain, thus a field.  But as it is also a localization of $R$, it follows that it must be the fraction field of $R$.
\end{proof}

\section{Questions}\label{sec:questions}
A new subject deserves some entry points.  Hence, we present here five questions for further exploration.

\begin{question}
Let $k$ be a field and $R$ a finitely generated $k$-algebra that is a \ua\ Euclidean domain.  Does it follow that $R \cong k$ or $k[X]$?
\end{question}

By Theorem~\ref{thm:Eucl}, we know this both when $k$ is algebraically closed and when $k=\F_2$ is a maximal subfield of $R$.

\begin{question}
    Characterize when the monoid algebra $A[M]$ is \ua, when $M$ is not necessarily cancellative.
\end{question}

All the cancellative cases were handled in Theorem~\ref{thm:monalg}.

\begin{question}
    Let $R$ be a ring with prime nilradical.  In what generality is it true that $\udim R \leq \dim R$?
\end{question}

This inequality holds when $R$ is an integral domain finitely generated over a field (see Theorem~\ref{thm:uadimbound}).  It also holds for rings of \uad\ $\leq 1$ as a zero-dimensional ring with prime nilradical is always \ua.

\begin{question}\label{q:geoudim}
    Find a geometric interpretation of \uad\ for domains that are finitely generated over a field.
\end{question}

Theorem~\ref{thm:puncaone} is such an interpretation for \uad\ \emph{zero}.  But for any higher integer, we do not know a geometric interpretation.

\begin{question}
    Let $R \subseteq S$ be an integral (or module-finite) extension of integral domains, let $n\in \N_0$, and suppose $\udim S \leq n$.  Does it follow that $\udim R \leq n$?
\end{question}

Proposition~\ref{pr:intext} answers the above question positively when $n=0$.

\section*{Acknowledgment}
We would like to take this opportunity to thank several people who made contributions to the paper, listed below: Karl Schwede showed us why the argument proving Theorem~\ref{thm:Eucl}(2) doesn't generalize to all infinite fields (see Remark~\ref{rem:Schwede}). He also pointed out Remark~\ref{rem:ellbyag}. Takumi Murayama pointed us to Chevalley's theorem, which allowed us to complete the proof of Theorem~\ref{thm:geom}. 
 Anton Lukyanenko asked a question that led to us finding Example~\ref{ex:HRredux}.  Swan Klein asked Question~\ref{q:geoudim} and corrected an indexing problem in Definition~\ref{def:udim}.

We also offer a sincere thanks to the anonymous referee, who read over our paper carefully, caught several minor errors, pointed out references in the literature of which we were not aware, and made suggestions that improved the exposition.

\providecommand{\bysame}{\leavevmode\hbox to3em{\hrulefill}\thinspace}
\providecommand{\MR}{\relax\ifhmode\unskip\space\fi MR }
\providecommand{\MRhref}[2]{%
  \href{http://www.ams.org/mathscinet-getitem?mr=#1}{#2}
}
\providecommand{\href}[2]{#2}

\end{document}